\numberwithin{equation}{section}
\newcommand{\I}{\mathbb I}
\newcommand{\reg}{\mathrm{e}}
\newtheorem{thm}{Theorem}[section]
\newtheorem{pro}[thm]{Proposition}
\newtheorem{lem}[thm]{Lemma}
\newtheorem{cor}[thm]{Corollary}
\begin{document}


\title[External characterization of I-favorable spaces]
{External characterization of I-favorable spaces}

\author{Vesko  Valov}
\address{Department of Computer Science and Mathematics, Nipissing University,
100 College Drive, P.O. Box 5002, North Bay, ON, P1B 8L7, Canada}
\email{veskov@nipissingu.ca}
\thanks{Research supported in part by NSERC Grant 261914-08}

\keywords{compact spaces, continuous inverse systems, $\mathrm
I$-favorable spaces, skeletal maps}

\subjclass{Primary 54C10; Secondary 54F65}


\begin{abstract}
We provide both a spectral and an internal characterizations of
arbitrary $\mathrm{I}$-favorable spaces with respect to co-zero
sets. As a corollary we establish that any product of compact
$\mathrm{I}$-favorable spaces with respect to co-zero sets is also
$\mathrm{I}$-favorable with respect to co-zero sets.  We also prove
that every $C^*$-embedded $\mathrm{I}$-favorable with respect to
co-zero sets subspace of an extremally disconnected space is
extremally disconnected.
\end{abstract}

\maketitle

\markboth{}{I-favorable spaces}



\section{Introduction}
In this paper we assume that the topological spaces are Tychonoff
and the single-valued maps are continuous. Moreover, all inverse
systems are supposed to have surjective bonding maps.

P. Daniels, K. Kunen and H. Zhou \cite{dkz} introduced the so called
open-open game between two players, and the spaces with a winning
strategy for the first player were called $\mathrm{I}$-favorable.
Recently A. Kucharski and S. Plewik (see \cite{kp1}, \cite{kp2} and
\cite{kp3}) investigated the connection of $\mathrm{I}$-favorable
spaces and skeletal maps. In particular, they proved in \cite{kp2}
that the class of compact $\mathrm{I}$-favorable spaces and the
skeletal maps are adequate in the sense of E. Shchepin \cite{sc1}.

On the other hand, the author announced \cite[Theorem 3.1(iii)]{v} a
characterization of the class of spaces admitting a lattice
\cite{sc1} of skeletal maps (the skeletal maps in \cite{v} were
called $\mathrm{ad}$-open maps) as dense subset of the limit spaces
of $\sigma$-complete almost continuous inverse systems with skeletal
projections. Moreover, an internal characterization of the above
class was also announced \cite[Theorem 3.1(ii)]{v}. In this paper we
are going to show that the later class coincides with that one of
{\em $\mathrm{I}$-favorable spaces with respect to co-zero sets},
and to provide the proof of these characterizations. Therefore, we
obtain both a spectral and an internal characterizations of
$\mathrm{I}$-favorable spaces with respect to co-zero sets.

The following theorem is our main result:

\begin{thm}
For a space $X$ the following conditions are equivalent:
\begin{itemize}
\item[(i)] $X$ is $\mathrm{I}$-favorable with respect to co-zero sets;
\item[(ii)] Every $C^*$-embedding of $X$ in another space
is $\pi$-regular;
\item[(iii)] $X$ is skeletally generated.
\end{itemize}
\end{thm}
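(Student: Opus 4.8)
The plan is to prove the cycle of implications (i) $\Ra$ (ii) $\Ra$ (iii) $\Ra$ (i), with the bulk of the work going into the spectral/inverse-system machinery that connects a winning strategy in the open-open game with $\sigma$-complete almost-continuous inverse systems of skeletal projections. I would first fix careful working definitions: $X$ is \emph{$\mathrm I$-favorable with respect to co-zero sets} if Player I has a winning strategy in the open-open game in which, at each move, the legal open sets are restricted to co-zero sets of $X$; a $C^*$-embedding $X\subseteq Y$ is \emph{$\pi$-regular} if there is a $\pi$-base-type condition (a skeletal-like retraction property) relating co-zero sets of $Y$ to co-zero sets of $X$; and $X$ is \emph{skeletally generated} if $X$ is (homeomorphic to) a dense subspace of the limit of a $\sigma$-complete almost-continuous inverse system $\{X_\alpha, p^\beta_\alpha\}$ whose short projections $p^\beta_\alpha$ are skeletal onto second-countable (or otherwise ``small'') spaces. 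These are presumably the formal definitions given in the body of the paper, and the proof just has to move between them.

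\smallskip

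For (i) $\Ra$ (iii): starting from a winning strategy $\sigma$ for Player I in the co-zero open-open game on $X$, I would build a directed family $\mathcal A$ of countable subalgebras of the algebra of co-zero sets of $X$ that is closed under countable unions of chains and ``$\sigma$-closed'' in the sense that whenever $A\in\mathcal A$ and $U\in A$, the response $\sigma$ prescribes to any finite play inside $A$ still lies in $\mathcal A$; a standard closing-off argument over $\w_1$-many steps produces cofinally many such $A$. Each $A\in\mathcal A$ determines a second-countable quotient $X_A$ of (an appropriate subspace/compactification of) $X$ via the co-zero sets in $A$, the bonding maps $X_B\to X_A$ for $A\subseteq B$ are the natural ones, and the system is $\sigma$-complete by construction. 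The key point — and I expect this to be \emph{the main obstacle} — is to verify that the projections $X\to X_A$ (equivalently the bonding maps) are \emph{skeletal}: one must show that the image of every nonempty open set has nonempty interior, and this is exactly where the winning strategy is used, since the strategy $\sigma$ restricted to an invariant algebra $A$ witnesses that $A$ is a $\pi$-base-like family relative to $X$, which is precisely skeletality of the quotient map. One must also check ``almost continuity'' of the limit (continuity on the $\sigma$-directed part), which again follows from the closing-off.

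\smallskip

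For (iii) $\Ra$ (ii): given that $X$ is dense in $\varprojlim\{X_\alpha,p^\beta_\alpha\}$ with skeletal short projections, and given an arbitrary $C^*$-embedding $X\subseteq Y$, I would take a co-zero set $W$ of $Y$, intersect with $X$, approximate $W\cap X$ by a co-zero set of $X$ that is pulled back from some $X_\alpha$ (using $C^*$-embedding to go between co-zero sets of $Y$ and of $X$, e.g.\ via bounded functions and the Stone extension), and use skeletality of $p^\beta_\alpha$ to produce the required interior/density condition defining $\pi$-regularity of the embedding. For (ii) $\Ra$ (i): apply the hypothesis to the particular $C^*$-embedding $X\subseteq \beta X$ (or into a Tychonoff cube); $\pi$-regularity of this embedding yields a rich family of skeletal ``projection-like'' co-zero maps, from which Player I's strategy is read off by the usual bookkeeping — at stage $n$, having seen Player II's co-zero set $V_n$, Player I uses $\pi$-regularity to pick a co-zero set $U_{n+1}\subseteq V_n$ that controls a prescribed countable algebra, and one checks that the union $\bigcup U_n$ is dense, which is the winning condition. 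Throughout, the recurring technical engine is the equivalence ``the quotient by a countable co-zero algebra is skeletal $\iff$ that algebra is a $\pi$-base-type family,'' together with Shchepin-style spectral theorems (which are presumably recalled earlier), so most individual steps should be routine once that dictionary is set up; the real content is the closing-off construction in (i) $\Ra$ (iii).
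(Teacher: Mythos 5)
Your cycle (i) $\Rightarrow$ (iii) $\Rightarrow$ (ii) $\Rightarrow$ (i) runs in the opposite direction to the paper's (i) $\Rightarrow$ (ii) $\Rightarrow$ (iii) $\Rightarrow$ (i), which is logically fine, and your closing-off construction for (i) $\Rightarrow$ (iii) is essentially the paper's Proposition 2.5/Theorem 2.6: countable coordinate sets closed under the strategy, with skeletality of the projections verified by playing the game against a putative witness of non-skeletality. Note, though, that the paper carries this out only for \emph{compact} $X$ and reaches the general case indirectly, via $\beta X$ and the external characterization (ii); if you do the closing-off directly on a non-compact $X$ you still owe a verification of conditions (5)--(7) of skeletal generation (density in limits of countable chains, factorization of bounded continuous functions), which your outline does not mention.

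The genuine gap is in your treatment of (ii), i.e.\ of $\pi$-regular embeddings. First, the definition goes the other way from what you describe: a $\pi$-regular embedding $X\subset Y$ is an operator $\reg$ assigning to each member $U$ of a $\pi$-base of $X$ an open set $\reg(U)\subset Y$ such that $\reg(U)\cap X$ is dense in $U$ \emph{and} $\reg(U)\cap\reg(V)=\varnothing$ whenever $U\cap V=\varnothing$; it maps open sets of $X$ to open sets of $Y$, not co-zero sets of $Y$ to $X$. Second, and more importantly, your sketch of (iii) $\Rightarrow$ (ii) (``use skeletality to produce the required interior/density condition'') only addresses the density clause. The disjointness-preservation clause is the hard part of the whole theorem --- it is the analogue of Shirokov's external characterization of $\kappa$-metrizable compacta --- and skeletality of a single projection gives no control over it, since $\mathrm{Int}\,\overline{p_\alpha(U)}$ and $\mathrm{Int}\,\overline{p_\alpha(V)}$ can easily meet for disjoint $U,V$. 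The paper needs an entire apparatus for this: the rank $q(U)$ of an open set, the fact that open sets of finite rank form a $\pi$-base (Proposition 3.4), the disjointness Lemma 3.3, and a transfinite induction on weight (Proposition 3.5) that assembles the operator as a product $\prod\{\mathrm{Int}\,\overline{p_\alpha(U)}:\alpha\in\Omega(U)\}$ over the finitely many rank coordinates, composed with inductively given operators on the factors. None of this is visible in your outline, so as it stands the proposal does not establish the equivalence with (ii). Your direct (ii) $\Rightarrow$ (i) is also underspecified --- the paper instead proves (ii) $\Rightarrow$ (iii) by a closing-off with respect to the operator $\reg$ inside $\I^A$ (Proposition 3.7) and then invokes the Kucharski--Plewik construction of a winning strategy from a skeletal inverse system (Lemma 2.4) --- but that step is comparatively routine.
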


We say that a subspace $X\subset Y$ is {\em $\pi$-regularly
embedded} in $Y$ \cite{v} if there exists a $\pi$-base $\mathcal B$
for $X$ and a function $\reg\colon\mathcal B\to\mathcal T_Y,$ where
$\mathcal T_Y$ is the topology of $Y$, such that:
\begin{itemize}
\item[(1)] $\reg(U)\cap X$ is a dense subset of $U$;
\item[(2)] $\reg(U)\cap\reg(V)=\varnothing$ provided $U\cap
V=\varnothing$.
\end{itemize}
It is easily seen that the above definition doesn't change if
$\mathcal B$ is either a base for $X$ or $\mathcal B=\mathcal T_X$.

A space $X$ is {\em skeletally generated} if there exists an inverse
system $\displaystyle S=\{X_\alpha, p^{\beta}_\alpha, A\}$ of
separable metric spaces $X_\alpha$ such that:
\begin{itemize}
\item[(3)] All bonding maps $p^{\beta}_\alpha$ are surjective and
skeletal;
\item[(4)] The index set $A$ is $\sigma$-complete (every countable chain in
$A$ has a supremum in $A$);
\item[(5)] For every countable chain $\{\alpha_n:n\geq 1\}\subset A$ with
$\beta=\sup\{\alpha_n:n\geq 1\}$ the space $X_\beta$ is a (dense)
subset of
$\displaystyle\underleftarrow{\lim}\{X_{\alpha_n},p^{\alpha_{n+1}}_{\alpha_n}\}$;
\item[(6)] $X$ is embedded in $\displaystyle\underleftarrow{\lim}
S$ such that $p_\alpha(X)=X_\alpha$ for each $\alpha$, where
$p_\alpha\colon\displaystyle\underleftarrow{\lim}S\to X_\alpha$ is
the $\alpha$-th limit projection;
\item[(7)] For every bounded continuous function $f\colon X\to\mathbb R$
there exists $\alpha\in A$ and a continuous function $g\colon
X_\alpha\to\mathbb R$ with $f=g\circ (p_\alpha|X)$.
\end{itemize}
We say that an inverse system $S$ satisfying conditions $(3) - (6)$
is {\em almost $\sigma$-continuous}. Let us note that condition
$(6)$ implies that $X$ is a dense subset of
$\displaystyle\underleftarrow{\lim}S$.

There exists a similarity between $\mathrm{I}$-favorable spaces with
respect to co-zero sets and $\kappa$-metrizable compacta \cite{sc2}.
Item $(ii)$ is analogical to Shirokov's \cite{s} external
characterization of $\kappa$-metrizable compacta, while the
definition of skeletally generated spaces resembles that one of
openly generated compacta \cite{sc3}. Moreover, according to
Shapiro's result \cite{s}, every continuous image of a
$\kappa$-metrizable compactum is skeletally generated, so it is
$\mathrm{I}$-favorable with respect to co-zero sets. So, next
question seems reasonable.

\smallskip\noindent
{\bf Question.} Is there any characterization of $\kappa$-metrizable
compacta in terms of a game between two players?

\smallskip

It is shown in \cite[Corollary 1.7]{dkz} that the product of
$\mathrm{I}$-favorable spaces is also $\mathrm{I}$-favorable. Next
corollary shows that a similar result is true for
$\mathrm{I}$-favorable spaces with respect to co-zero sets.

\begin{cor}
Any product of compact $\mathrm I$-favorable spaces with respect to
co-zero sets is also $\mathrm I$-favorable with respect to co-zero
sets.
\end{cor}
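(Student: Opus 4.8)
The plan is to prove the equivalent statement that $X=\prod_{t\in T}X_t$ is \emph{skeletally generated} and then quote the implication $(iii)\Rightarrow(i)$ of the main theorem. We may assume every $X_t\neq\varnothing$. By the main theorem each $X_t$ admits an almost $\sigma$-continuous inverse system $S_t=\{X^t_\alpha,p^{t,\beta}_\alpha,A_t\}$ of separable metric spaces satisfying conditions $(5)$--$(7)$. Since $X_t$ is compact and its limit projections onto the $X^t_\alpha$ are surjective, every $X^t_\alpha$ is in fact compact metric and $X_t=\underleftarrow{\lim}S_t$; moreover, for a countable chain in $A_t$ with supremum $\beta$, condition $(5)$ together with the fact that $X^t_\beta$ is a compact (hence closed) dense subspace of $\underleftarrow{\lim}\{X^t_{\alpha_n}\}$ forces $X^t_\beta=\underleftarrow{\lim}\{X^t_{\alpha_n}\}$; that is, $S_t$ is $\omega$-continuous.

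Next I would build the \emph{product system} $S=\{X_a,p^b_a,A\}$. Let $A$ be the set of all functions $a$ whose domain $\dom a$ is a countable subset of $T$ and with $a(t)\in A_t$ for $t\in\dom a$, ordered by $a\le b$ iff $\dom a\subseteq\dom b$ and $a(t)\le b(t)$ for $t\in\dom a$; put $X_a=\prod_{t\in\dom a}X^t_{a(t)}$, and let $p^b_a\colon X_b\to X_a$ be the coordinate projection $\prod_{t\in\dom b}X^t_{b(t)}\to\prod_{t\in\dom a}X^t_{b(t)}$ followed by $\prod_{t\in\dom a}p^{t,b(t)}_{a(t)}$. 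One checks routinely that $A$ is directed and $\sigma$-complete (a countable chain has the coordinatewise supremum, its domain being a countable union of countable sets), that every $X_a$ is separable metrizable (a countable product of separable metric spaces), and that every $p^b_a$ is a surjective skeletal map: coordinate projections are open surjections, products of surjective skeletal maps between compacta are skeletal (closures commute with products and a basic box is carried onto a box), and compositions of skeletal maps are skeletal. Thus $S$ is almost $\sigma$-continuous, which gives $(3)$ and $(4)$.

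I would then identify the limit. The map $X\to\underleftarrow{\lim}S$ sending $(y_t)_{t\in T}$ to the thread $a\mapsto\bigl(q^t_{a(t)}(y_t)\bigr)_{t\in\dom a}$, where $q^t_\alpha$ denotes the $\alpha$-th limit projection of $S_t$, is a continuous bijection from a compact space onto a Hausdorff space, hence a homeomorphism, and under this identification each $p_a\colon\underleftarrow{\lim}S\to X_a$ is surjective; this is $(6)$. A coordinatewise analysis, again upgrading ``dense'' to ``equal'' by compactness, shows $X_a=\underleftarrow{\lim}\{X_{a_n}\}$ for a countable chain with supremum $a$, so $(5)$ holds and $S$ is $\omega$-continuous. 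For $(7)$, let $f\colon X\to\IR$ be continuous (automatically bounded). A classical fact --- a continuous real-valued function on a product of compact spaces depends on at most countably many coordinates, proved by approximating $f$ uniformly by functions of finitely many coordinates via Stone--Weierstrass --- yields a countable $T_0\subseteq T$ and a continuous $h\colon\prod_{t\in T_0}X_t\to\IR$ with $f=h\circ\pr_{T_0}$. Now $\prod_{t\in T_0}X_t$ is the limit of the $\omega$-continuous $\sigma$-complete subsystem $\{X_a:\dom a=T_0\}$ of compact metric spaces, so the standard factorization argument for such spectra applies: for each $n$ choose, by compactness, an index $\mu_n$ so that the oscillation of $h$ on every fibre of the projection to $X_{\mu_n}$ is below $2^{-n}$; taking $\mu=\sup_n\mu_n$ and using $\omega$-continuity, $h$ is constant on the fibres of the projection to $X_\mu$ and so factors as $h=g\circ p_\mu$ with $g\colon X_\mu\to\IR$ continuous. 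Hence $f=g\circ(p_\mu|X)$, which is $(7)$. Therefore $X$ is skeletally generated, and the corollary follows.

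The step carrying the real content is $(7)$. It decomposes into a ``horizontal'' reduction (a continuous function on the product depends on countably many coordinates) and a ``vertical'' factorization through a single stage of an $\omega$-continuous spectrum; compactness of the factors $X_t$ is essential in both, first to make the factors of the product behave like compacta and second to turn condition $(5)$ for the $S_t$'s into genuine equalities and hence $S$ into an $\omega$-continuous spectrum, which is precisely what makes the factorization lemma available. By contrast, checking that the bonding maps $p^b_a$ are skeletal surjections and that $\underleftarrow{\lim}S$ is homeomorphic to $X$ is routine. (A more self-contained alternative would be to play the open--open game on $X$ directly, as in the proof of Corollary~1.7 in \cite{dkz}, using that a nonempty co-zero subset of the product contains a box of co-zero sets supported on finitely many factors; but the bookkeeping needed to run the factor strategies simultaneously is no shorter.)
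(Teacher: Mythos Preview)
Your argument is correct, but it takes a different route from the paper's. The paper works through characterization~(ii) of Theorem~1.1 rather than~(iii): each $X_\alpha$ is embedded in a cube $\mathbb{I}^{A(\alpha)}$ with a $\pi$-regular operator $e_\alpha\colon\mathcal T_{X_\alpha}\to\mathcal T_{\mathbb{I}^{A(\alpha)}}$, and these are multiplied in the obvious way to a $\pi$-regular operator $e\colon\mathcal T_X\to\mathcal T_K$, $K=\prod_\alpha\mathbb{I}^{A(\alpha)}$, by setting $e(W)=\bigcup\{\gamma(U):U\in\mathcal B,\ U\subset W\}$ for standard boxes $U$, where $\gamma$ applies the $e_\alpha$'s coordinatewise. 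Since $K$ is itself the limit of a $\sigma$-complete system with \emph{open} bonding maps, Proposition~3.7 then gives that $X$ is skeletally generated. The paper's proof is much shorter because the product of $\pi$-regular operators is trivially $\pi$-regular, so all the work is already packaged in Proposition~3.7; it also avoids the ``depends on countably many coordinates'' lemma and the explicit oscillation/factorization argument you need for condition~(7). Your approach, by contrast, is more self-contained in that it builds the witnessing spectrum for $X$ explicitly out of the spectra for the factors, and it makes transparent exactly where compactness enters (upgrading the almost-limits to genuine limits and enabling the factorization of continuous functions). Either approach is fine; the paper's is simply the quicker one given the machinery already developed.
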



Corollary 1.3 below is similar to a result of Bereznicki\v{i}
\cite{b} about specially embedded subset of extremally disconnected
spaces.

\begin{cor}
Let $X$ be a $C^*$-embedded subset of an extremally disconnected
space. If $X$ is $\mathrm I$-favorable with respect to co-zero sets,
then it is also extremally disconnected.
\end{cor}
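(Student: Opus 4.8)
The plan is to derive this corollary from the implication (i)$\Rightarrow$(ii) of Theorem 1.1, combined with the elementary fact that a space $Z$ is extremally disconnected if and only if $\cl_Z U\cap\cl_Z V=\varnothing$ for every pair of disjoint open sets $U,V\subset Z$. The verification of this equivalence is routine (for the nontrivial direction, apply the hypothesis to $U$ and $V:=Z\setminus\cl_Z U$, obtain $\cl_Z V\subset V$, hence $V$ is closed and $\cl_Z U=Z\setminus V$ is open), and I would dispatch it in one line.

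So let $X$ be $C^*$-embedded in an extremally disconnected space $Y$. Since $X$ is $\mathrm I$-favorable with respect to co-zero sets, Theorem 1.1 makes $X$ $\pi$-regularly embedded in $Y$, so I fix a $\pi$-base $\mathcal B$ for $X$ and a function $\reg\colon\mathcal B\to\mathcal T_Y$ satisfying $(1)$ and $(2)$. Given disjoint open sets $U,V\subset X$, put $\mathcal B_U=\{W\in\mathcal B:W\subset U\}$ and $\mathcal B_V=\{W\in\mathcal B:W\subset V\}$, and set $G=\bigcup\{\reg(W):W\in\mathcal B_U\}$ and $H=\bigcup\{\reg(W):W\in\mathcal B_V\}$, both open in $Y$. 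Any $W\in\mathcal B_U$ and $W'\in\mathcal B_V$ are disjoint (they lie inside the disjoint sets $U$ and $V$), so condition $(2)$ gives $\reg(W)\cap\reg(W')=\varnothing$; hence $G\cap H=\varnothing$, and by extremal disconnectedness of $Y$ together with the equivalence recalled above, $\cl_Y G\cap\cl_Y H=\varnothing$.

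Next I would transfer this conclusion back to $X$. Because $\mathcal B$ is a $\pi$-base, $\bigcup\mathcal B_U$ is dense in $U$, so $\cl_X U=\cl_X\!\big(\bigcup\mathcal B_U\big)$; and for each $W\in\mathcal B_U$ condition $(1)$ yields $W\subset\cl_X(\reg(W)\cap X)\subset\cl_Y\reg(W)\subset\cl_Y G$, whence $\bigcup\mathcal B_U\subset\cl_Y G$ and therefore $\cl_X U\subset\cl_Y G$. Symmetrically $\cl_X V\subset\cl_Y H$, so $\cl_X U\cap\cl_X V\subset\cl_Y G\cap\cl_Y H=\varnothing$. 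Applying the equivalence once more, now in $X$, shows that $X$ is extremally disconnected.

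I do not expect a serious obstacle here: the argument is essentially a transfer of the ``disjoint open sets have disjoint closures'' property from $Y$ to $X$ along the $\pi$-regular embedding, and it uses neither compactness nor any countable structure. The only point that needs some care is the interplay between closures taken in $X$ and in $Y$ (keeping in mind $\cl_X A=X\cap\cl_Y A$ for $A\subset X$), together with the observation that a $\pi$-base, rather than a genuine base, already suffices to recover $\cl_X U$ from the members of $\mathcal B$ contained in $U$; condition $(1)$ of $\pi$-regularity is exactly what pushes those pieces of $X$ into $\cl_Y G$.
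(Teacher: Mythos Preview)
Your argument is correct. Both your proof and the paper's use Theorem~1.1(ii) to obtain a $\pi$-regular operator into the ambient extremally disconnected space $Y$ and then transfer the extremal disconnectedness back to $X$; the difference is only in packaging. The paper works with $\reg$ defined on all of $\mathcal T_X$ (which, as noted after the definition, is equivalent) and proves directly that $\overline{U}^X=\overline{\reg(U)}^Y\cap X$ for every open $U\subset X$; since $\overline{\reg(U)}^Y$ is open in $Y$, this immediately gives that $\overline{U}^X$ is open in $X$. Your route instead passes through the equivalent ``disjoint open sets have disjoint closures'' characterization, and because you keep $\reg$ defined only on a $\pi$-base you must assemble $G$ and $H$ as unions over $\mathcal B_U$ and $\mathcal B_V$. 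The paper's version is a bit shorter and yields the slightly stronger intermediate identity $\overline{U}^X=\overline{\reg(U)}^Y\cap X$, whereas your version makes explicit that only the $\pi$-base formulation of $\pi$-regularity is needed; substantively the two arguments coincide.
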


\section{$\mathrm{I}$-favorable spaces with respect to co-zero sets}

In this section we consider a modification of the open-open game
when the players are choosing co-zero sets only. Let us describe
this game. Players are playing in a topological space $X$. Player I
choose a non-empty co-zero set $A_0\subset X$, then Player II choose
a non-empty co-zero set $B_0\subset A_0$. At the $n$-th round Player
I choose a non-empty co-zero set $A_n\subset X$ and the Player II is
replying by choosing a non-empty co-zero set $B_n\subset A_n$.
Player I wins if the union $B_0\cup B_1\cup...$ is dense in $X$,
otherwise Player II wins. The space $X$ is called {\em
$\mathrm{I}$-favorable with respect to co-zero sets} if Player I has
a winning strategy. Denote by $\Sigma_X$ the family of all non-empty
co-zero sets in $X$. A winning strategy, see \cite{kp1}, is a
function $\sigma:\bigcup\{\Sigma_X^n:n\geq 0\}\to\Sigma_X$ such that
for each game
$$\big(\sigma(\varnothing),B_0,\sigma(B_0),B_1,\sigma(B_0,B_1),B_2,...,B_n,\sigma(B_0,B_1,..,B_n),B_{n+1},,,\big),$$
where $B_k$ and $\sigma(\varnothing)$ belong to $\Sigma_X$ and
$B_{k+1}\subset\sigma(B_0,B_1,..,B_k)$ for every $k\geq 0$, the
union $\bigcup_{n\geq 0}B_n$ is dense in $X$.
For example, every space with a countable $\pi$-base $\mathcal B$ of
co-zero sets is $\mathrm{I}$-favorable with respect to co-zero sets
(the strategy for Player I is to keep choosing every member of
$\mathcal B$, see \cite[Theorem 1.1]{dkz}). Let us mention that if
in the above game the players are choosing arbitrary open subsets of
$X$ and Player I has a winning strategy, then $X$ is called $\mathrm
I$-favorable, see \cite{dkz}.

\begin{pro}
If $X$ is $\mathrm{I}$-favorable with respect to co-zero sets, so is
$\beta X$.
\end{pro}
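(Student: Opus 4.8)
The plan is to transfer a winning strategy for Player I on $X$ to a winning strategy on $\beta X$ by exploiting the canonical correspondence between co-zero sets of $\beta X$ and co-zero sets of $X$. The key observation is that if $W\subset\beta X$ is a non-empty co-zero set, then $W\cap X$ is a non-empty co-zero set in $X$ (non-empty because $X$ is dense in $\beta X$), and conversely every co-zero set $U\subset X$ is of the form $\widetilde U\cap X$ for some co-zero set $\widetilde U\subset\beta X$; moreover one may choose $\widetilde U$ to be the \emph{largest} such set, namely the union of all co-zero sets of $\beta X$ meeting $X$ exactly in a subset of $U$, equivalently $\widetilde U=\beta X\setminus \mathrm{cl}_{\beta X}(X\setminus U)$ after passing to the co-zero hull. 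The crucial point, which I would isolate as the first step, is that $U\mapsto\widetilde U$ preserves disjointness and inclusion in the relevant direction, and that $\widetilde U\cap X$ is dense in $U$ — in fact equal to $U$ — so that density of a union of co-zero sets in $X$ is equivalent to density in $\beta X$ of the corresponding union of hulls.

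Concretely, given a winning strategy $\sigma\colon\bigcup_{n\ge 0}\Sigma_X^n\to\Sigma_X$ for Player I on $X$, I would define a strategy $\sigma'$ on $\beta X$ as follows. Set $\sigma'(\varnothing)=\widetilde{\sigma(\varnothing)}$. Given a partial play $(C_0,\dots,C_n)$ in $\beta X$ with each $C_k\in\Sigma_{\beta X}$ and $C_{k+1}\subset\sigma'(C_0,\dots,C_k)$, put $B_k=C_k\cap X\in\Sigma_X$; one checks inductively that $B_{k+1}\subset\sigma(B_0,\dots,B_k)$, using that $C_{k+1}\subset\sigma'(C_0,\dots,C_k)=\widetilde{\sigma(B_0,\dots,B_k)}$ and intersecting with $X$. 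Then define $\sigma'(C_0,\dots,C_n)=\widetilde{\sigma(B_0,\dots,B_n)}$. Since $\sigma$ is winning, $\bigcup_n B_n$ is dense in $X$; as $X$ is dense in $\beta X$ this forces $\bigcup_n B_n$, and hence the larger set $\bigcup_n C_n$, to be dense in $\beta X$. Therefore $\sigma'$ is a winning strategy for Player I on $\beta X$.

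The main obstacle is the bookkeeping around the hull operation $U\mapsto\widetilde U$: one must verify that $\widetilde U$ is genuinely a co-zero subset of $\beta X$ (not merely open), that $\widetilde U\cap X=U$, and that $U\cap V=\varnothing$ in $X$ implies $\widetilde U\cap\widetilde V=\varnothing$ in $\beta X$. The last point is where density of $X$ in $\beta X$ is used essentially: two open sets of $\beta X$ are disjoint iff their traces on the dense set $X$ are disjoint. For the co-zero claim, if $U=f^{-1}(\mathbb R\setminus\{0\})$ for a bounded $f\colon X\to\mathbb R$, then $f$ extends over $\beta X$ to some $\bar f$, and one takes $\widetilde U=\bar f^{-1}(\mathbb R\setminus\{0\})$; boundedness is automatic since we may replace $f$ by $\arctan f$. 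Once these routine facts are in place, the strategy transfer above goes through verbatim, and no further difficulty arises.
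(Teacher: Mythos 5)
Your proposal is correct and follows essentially the same route as the paper: extend each co-zero set $U\subset X$ to a co-zero set of $\beta X$ whose trace on $X$ is $U$ (via extending the defining function over $\beta X$), transfer the strategy by taking traces of the opponent's moves, and conclude density in $\beta X$ from density of $\bigcup_n B_n$ in the dense subspace $X$. The extra discussion of the ``largest hull'' and of disjointness preservation is not needed for this argument; the simple extension $c(U)$ with $c(U)\cap X=U$ suffices, exactly as in the paper.
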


\begin{proof}
Let $\sigma:\bigcup\{\Sigma_X^n:n\geq 0\}\to\Sigma_X$ be a winning
strategy for Player I. Observe that for every co-zero set $U$ in $X$
there exists a co-zero set $c(U)$ in $\beta X$ with $c(U)\cap X=U$.
Now define a function $\overline{\sigma}:\bigcup\{\Sigma_{\beta
X}^n:n\geq 0\}\to\Sigma_{\beta X}$ by
$$\overline{\sigma}(U_1,..,U_n)=c\big(\sigma(U_1\cap
X,..,U_n\cap X)\big).$$ Suppose
$$\big(\overline{\sigma}(\varnothing),U_0,\overline{\sigma}(U_0),U_1,\overline{\sigma}(U_0,U_1),...,U_n,\overline{\sigma}(U_0,U_1,..,U_n),U_{n+1},,,\big)$$
is a sequence such that $\overline{\sigma}(\varnothing)$ and all
$U_k$ belong to $\Sigma_{\beta X}$ with
$U_{k+1}\subset\overline{\sigma}(U_0,U_1,..,U_k)$ for each $k\geq
0$. Consequently, $U_{k+1}\cap X\subset\sigma(U_0\cap X,..,U_k\cap
X)$, $k\geq 0$. So, the set $X\cap\bigcup_{k\geq 0}U_k$ is dense in
$X$ which implies that $\bigcup_{k\geq 0}U_k$ is dense $\beta X$.
Therefore, $\beta X$ is $\mathrm{I}$-favorable with respect to
co-zero sets.
\end{proof}

A map $f\colon X\to Y$ is said to be skeletal if the closure
$\overline{f(U)}$ of $f(U)$ in $Y$ has a non-empty interior in $Y$
for every open set $U\subset X$. The proof of next lemma is
standard.

\begin{lem} For a map $f\colon X\to Y$ the following are equivalent:
\begin{itemize}
\item[(i)] $f$ is skeletal;
\item[(ii)] $\overline{f(U)}$ is regularly closed in $Y$, i.e., its
interior $\mathrm{Int}\overline{f(U)}$ in $Y$ is dense in
$\overline{f(U)}$ for every open $U\subset X$;
\item[(iii)] Every open $U\subset X$ contains an open set $V_U$ such
that $f(V_U)$ is dense in some open subset of $Y$.
\end{itemize}
If in addition $f$ is closed, the above three conditions are
equivalent to $f(U)$ has a non-empty interior in $Y$ for every open
$U\subset X$.
\end{lem}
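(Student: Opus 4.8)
The plan is to prove the cycle of equivalences $(i)\Leftrightarrow(ii)\Leftrightarrow(iii)$ for the Lemma on skeletal maps, since each implication is essentially a topological reformulation. First I would prove $(i)\Leftrightarrow(ii)$. The closure $\overline{f(U)}$ is by definition a closed set; recall that a closed set $F$ is regularly closed (equal to the closure of its interior) if and only if $\mathrm{Int}\,F$ is dense in $F$, and since $F$ is closed this is equivalent to $\mathrm{Int}\,F\neq\varnothing$ whenever $F\neq\varnothing$ only after one passes to relatively open pieces — so the clean statement is: $\overline{f(U)}$ has nonempty interior for every open $U$ iff $\overline{f(U)}$ is regularly closed for every open $U$. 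The direction $(ii)\Rightarrow(i)$ is immediate (a nonempty regularly closed set has nonempty interior). For $(i)\Rightarrow(ii)$, given open $U\subset X$ and a nonempty relatively open subset $W$ of $\overline{f(U)}$, write $W=\overline{f(U)}\cap G$ for $G$ open in $Y$; then $U\cap f^{-1}(G)$ is open in $X$ and nonempty (because $W\neq\varnothing$ forces $G$ to meet $f(U)$), so by $(i)$ applied to this open set, $\overline{f(U\cap f^{-1}(G))}$ has nonempty interior, and that interior is contained in $\overline{f(U)}\cap \overline{G}$; a short argument with openness of $G$ shows it meets $W$, giving density of $\mathrm{Int}\,\overline{f(U)}$ in $\overline{f(U)}$.

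Next I would prove $(i)\Leftrightarrow(iii)$. For $(iii)\Rightarrow(i)$: given open $U$, pick the open $V_U\subset U$ with $\overline{f(V_U)}\supset G$ for some nonempty open $G\subset Y$; then $\varnothing\neq G\subset\overline{f(V_U)}\subset\overline{f(U)}$, so $\overline{f(U)}$ has nonempty interior. For $(i)\Rightarrow(iii)$: given open $U$, condition $(i)$ says $\mathrm{Int}\,\overline{f(U)}=:G\neq\varnothing$; set $V_U=U\cap f^{-1}(G)$, which is open, and it is nonempty since $G\subset\overline{f(U)}$ meets $f(U)$. One checks $f(V_U)$ is dense in $G$: if $H\subset G$ is nonempty open, then $H$ meets $f(U)$ (as $H\subset\overline{f(U)}$), say $f(x)\in H$ with $x\in U$; then $x\in f^{-1}(H)\subset f^{-1}(G)$, so $x\in V_U$ and $f(x)\in H\cap f(V_U)$. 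Hence $f(V_U)$ is dense in the open set $G$, as required.

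Finally, for the closed case: if $f$ is closed then $\overline{f(U)}=f(\overline U)$ is not quite what we want, but more directly, for open $U\subset X$ the set $X\setminus U$ is closed, so $f(X\setminus U)$ is closed in $Y$, and I claim $\mathrm{Int}\,\overline{f(U)}\subset f(U)$: indeed if $y\in\mathrm{Int}\,\overline{f(U)}$ but $y\notin f(U)$, then $y\in Y\setminus f(U)$; surjectivity gives $y=f(x)$ for some $x$, necessarily $x\in X\setminus U$... this needs a little care, so the standard argument is: $Y\setminus f(X\setminus U)$ is open and contained in $f(U)$ (by surjectivity), and it equals $\mathrm{Int}\,f(U)$ up to the usual identities; combining with the nonemptiness of $\mathrm{Int}\,\overline{f(U)}$ one extracts that $f(U)$ itself has nonempty interior. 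The converse (if $f(U)$ always has nonempty interior then $f$ is skeletal) is trivial since $f(U)\subset\overline{f(U)}$.

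The main obstacle is none of the implications individually — each is a routine diagram-chase with interiors and closures — but rather stating the equivalence $(i)\Leftrightarrow(ii)$ precisely, because "regularly closed" must be interpreted as "$\mathrm{Int}\,\overline{f(U)}$ dense in $\overline{f(U)}$ for \emph{every} open $U$", and the density is not automatic from a single application of $(i)$; one must feed $(i)$ the smaller open sets $U\cap f^{-1}(G)$ to propagate nonempty interior into every relatively open portion of $\overline{f(U)}$. Everything else is bookkeeping with the definitions, which is why the paper calls the proof standard.
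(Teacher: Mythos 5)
Your treatment of the cycle $(i)\Leftrightarrow(ii)\Leftrightarrow(iii)$ is correct (the paper itself omits the proof as ``standard''): feeding $(i)$ the sets $U\cap f^{-1}(G)$ to get density of $\mathrm{Int}\,\overline{f(U)}$ in $\overline{f(U)}$ is exactly the right move, and the two implications with $(iii)$ are fine.

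The closed-map addendum, however, has a genuine gap. The inclusion you first claim, $\mathrm{Int}\,\overline{f(U)}\subset f(U)$, is false even for the identity map: take $X=Y=[0,1]$, $f=\mathrm{id}$, $U=(0,1)$; then $\mathrm{Int}\,\overline{f(U)}=[0,1]\not\subset(0,1)=f(U)$. Your fallback via $Y\setminus f(X\setminus U)$ does not repair this: that set is indeed open and (given surjectivity, which the lemma does not even assume for $f$) contained in $f(U)$, but it can be empty while $f(U)$ still has nonempty interior --- e.g.\ the fold map of two copies of $[0,1]$ onto $[0,1]$ with $U$ one copy --- and nothing in your sketch extracts nonemptiness of $\mathrm{Int}\,f(U)$ from nonemptiness of $\mathrm{Int}\,\overline{f(U)}$. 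The missing idea is regularity of $X$ (the paper's standing Tychonoff hypothesis): given nonempty open $U$, choose nonempty open $V$ with $\overline{V}\subset U$; since $f$ is closed, $f(\overline{V})$ is closed, so $\overline{f(V)}\subset f(\overline{V})\subset f(U)$, and skeletality gives $\varnothing\neq\mathrm{Int}\,\overline{f(V)}\subset\mathrm{Int}\,f(U)$. The converse direction is, as you say, trivial. With this replacement the proof is complete.
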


A space $X$ is said to be an {\em almost limit} of the inverse
system $\displaystyle S=\{X_\alpha, p^{\beta}_\alpha, A\}$ if $X$
can be embedded in $\displaystyle\underleftarrow{\lim}S$ such that
$p_\alpha(X)=X_\alpha$ for each $\alpha$. We denote this by
$X=\displaystyle\mathrm{a}-\underleftarrow{\lim}S$, and it implies
that $X$ is a dense subset of $\displaystyle\underleftarrow{\lim}
S$. Let $\displaystyle S=\{X_\alpha, p^{\beta}_\alpha,
\alpha<\beta<\tau\}$ be a well ordered inverse system with
(surjective) bonding maps $p^{\beta}_\alpha$, where $\tau$ is a
given cardinal. We say that $S$ is {\em almost continuous} if for
every limit cardinal $\gamma<\tau$ the space $X_\gamma$ is naturally
embedded in the limit space
$\displaystyle\underleftarrow{\lim}\{X_\alpha, p^{\beta}_\alpha,
\alpha<\beta<\gamma\}$. If always
$X_\gamma=\displaystyle\underleftarrow{\lim}\{X_\alpha,
p^{\beta}_\alpha, \alpha<\beta<\gamma\}$, $S$ is called {\em
continuous}.

\begin{lem}
Let $X=\displaystyle\mathrm{a}-\underleftarrow{\lim}\{X_\alpha,
p^{\beta}_\alpha, A\}$ such that all bonding maps $\displaystyle
p^{\beta}_\alpha$ are skeletal. Then all $p_\alpha$ and the
restrictions $p_\alpha|X\colon X\to X_\alpha$ are also skeletal.
\end{lem}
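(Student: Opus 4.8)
The plan is to reduce everything to a statement about pairs of consecutive indices and then bootstrap to arbitrary limit projections using composition and a density argument. First I would record the elementary fact that a composition of skeletal maps is skeletal: if $f\colon X\to Y$ and $g\colon Y\to Z$ are skeletal, then for open $U\subset X$ the set $\overline{f(U)}$ has nonempty interior $W$, and by Lemma 2.4(iii) applied to $g$ there is an open $V\subset W$ with $g(V)$ dense in some open subset of $Z$; since $f(U)$ is dense in $W\supset V$, one checks $g(f(U))$ is dense in that same open set, so $g\circ f$ is skeletal. This handles finite compositions, hence $p^{\beta}_\alpha$ for any $\alpha<\beta$ by transfinite bookkeeping is already given; the real content is the limit projections $p_\alpha\colon\varprojlim S\to X_\alpha$.

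For the limit projections I would use the standard fact that the sets $p_\gamma^{-1}(W)$, with $\gamma\in A$ and $W$ open in $X_\gamma$, form a base for $\varprojlim S$. Fix an open $U\subset\varprojlim S$; shrinking, assume $U=p_\gamma^{-1}(W)$ for some $\gamma$ and some open nonempty $W\subset X_\gamma$. If $\gamma\geq\alpha$ (in the directed set $A$, after passing to a common upper bound), then $p_\alpha=p^{\gamma}_\alpha\circ p_\gamma$, and $p_\gamma(U)=p_\gamma(p_\gamma^{-1}(W))=W$ because $p_\gamma$ is surjective; hence $p_\alpha(U)=p^{\gamma}_\alpha(W)$, which has nonempty interior since $p^{\gamma}_\alpha$ is skeletal (here $\overline{p^{\gamma}_\alpha(W)}$ regularly closed suffices). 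Thus $\overline{p_\alpha(U)}$ has nonempty interior, so $p_\alpha$ is skeletal. The case where the base element sits over an index not above $\alpha$ is absorbed by directedness of $A$: replace $\gamma$ by any $\delta\geq\alpha,\gamma$ and pull $W$ back to $X_\delta$.

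Finally, for the restrictions $p_\alpha|X\colon X\to X_\alpha$, I would invoke that $X$ is dense in $\varprojlim S$ (this is part of the definition of almost limit, and is restated in the excerpt). Given open $U\subset X$, write $U=U'\cap X$ with $U'$ open in $\varprojlim S$; by the previous paragraph $\overline{p_\alpha(U')}$ contains a nonempty open $V\subset X_\alpha$, and by density of $X$ the set $p_\alpha(U'\cap X)=p_\alpha(U)$ is dense in $p_\alpha(U')$, hence dense in $V$; since $p_\alpha(X)=X_\alpha$ and $p_\alpha|X$ maps into $X_\alpha$, we get that $\overline{p_\alpha(U)}$ (closure in $X_\alpha$) contains $V$, so $p_\alpha|X$ is skeletal. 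The only mild subtlety — the step I expect to need the most care — is the interaction between taking closures in $\varprojlim S$ versus in $X_\alpha$ and the use of surjectivity of the bonding maps in identifying $p_\gamma(p_\gamma^{-1}(W))$ with $W$; both are routine once one keeps the density of $X$ and the surjectivity hypotheses (assumed throughout the paper) firmly in view.
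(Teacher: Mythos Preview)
Your proof is correct and follows essentially the same route as the paper's: reduce to a basic open set $p_\gamma^{-1}(W)$, use directedness of $A$ to pass to an index $\delta$ above both $\alpha$ and $\gamma$, and then invoke skeletality of the bonding map $p^\delta_\alpha$ on the open set $(p^\delta_\gamma)^{-1}(W)$. The paper compresses your third paragraph into the single remark that, since $X$ is dense in the limit, $p_\alpha$ is skeletal iff $p_\alpha|X$ is; your opening paragraph on compositions of skeletal maps is not actually used anywhere (and the reference there should be to Lemma~2.2(iii), not Lemma~2.4(iii)).
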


\begin{proof}
Since $X$ is dense in $\displaystyle\underleftarrow{\lim}\{X_\alpha,
p^{\beta}_\alpha, A\}$, $p_\alpha$ is skeletal iff so is
$p_\alpha|X$, $\alpha\in A$. To prove that a given $p_\alpha$ is
skeletal, let $U\subset\displaystyle\underleftarrow{\lim}\{X_\alpha,
p^{\beta}_\alpha, A\}$ be an open set. We are going to show that
$\mathrm{Int}\overline{p_\alpha(U)}\neq\varnothing$ (both, the
interior and the closure are in $X_\alpha$). We can suppose that
$U=p_\beta^{-1}(V)$ for some $\beta$ with $V\subset X_\beta$ being
open. Moreover, since $A$ is directed, there exists $\gamma\in A$
with $\beta<\gamma$ and $\alpha<\gamma$. Then,
$p_\alpha(U)=p^\gamma_\alpha(W)$, where
$W=(p^\gamma_\beta)^{-1}(V)$. Finally, because $p^\gamma_\alpha$ is
skeletal, $\mathrm{Int}\overline{p_\alpha(U)}\neq\varnothing$.
\end{proof}

\begin{lem}
Every skeletally generated space is $\mathrm{I}$-favorable with
respect to co-zero sets.
\end{lem}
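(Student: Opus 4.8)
The plan is to produce an explicit winning strategy for Player I in the co-zero open-open game on a skeletally generated space $X$, built from an inverse system $S=\{X_\alpha,p^\beta_\alpha,A\}$ witnessing $(3)$–$(7)$. I will use two preliminary facts. By $(7)$ — applied to $\varphi\circ f$ for a homeomorphism $\varphi\colon\IR\to(-1,1)$ — every non-empty co-zero set of $X$ equals $(p_\alpha|X)^{-1}(W)$ for some $\alpha\in A$ and some non-empty co-zero $W\subseteq X_\alpha$; fix one such factorization for each co-zero set of $X$. By Lemma 2.4 together with $(6)$, each $p_\alpha|X\colon X\to X_\alpha$ is a skeletal surjection; and a skeletal surjection $f\colon X\to Y$ satisfies that $f^{-1}(G)$ is dense in $X$ whenever $G$ is open and dense in $Y$ (if $U\subseteq X$ is open and non-empty, then $\mathrm{Int}\,\overline{f(U)}$ is open and non-empty, hence meets $G$, hence the open non-empty set $\mathrm{Int}\,\overline{f(U)}\cap G\subseteq\overline{f(U)}$ meets $f(U)$). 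Finally fix, for each $\alpha\in A$, a countable base $\{V^\alpha_j:j\in\IN\}$ of $X_\alpha$ consisting of co-zero sets (possible, as $X_\alpha$ is separable metric), and a schedule $e\colon\{n\in\IN:n\ge1\}\to\IN\times\IN$ such that every pair $(i,j)$ satisfies $e(n)=(i,j)$ for some $n>i$.

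The strategy $\sigma$ is this. Put $\sigma(\varnothing)=X$ and fix $\alpha_0\in A$. Having been shown $B_0,\dots,B_{n-1}$, Player I lets $\alpha_n\in A$ be an upper bound — which exists because $A$ is directed — of $\alpha_{n-1}$ and of all the indices through which $B_0,\dots,B_{n-1}$ factor, and then, writing $e(n)=(i,j)$ (so $i<n$ and $\alpha_i$ is already defined), plays $(p_{\alpha_i}|X)^{-1}(V^{\alpha_i}_j)$ if this set is non-empty and $X$ otherwise. Each value lies in $\Sigma_X$, and the value at $(B_0,\dots,B_{n-1})$ depends only on that tuple, so $\sigma$ is a legitimate strategy. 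The effect of the upper-bound clause is that, along any play, each $B_k$ is $(p_{\alpha_{k+1}}|X)^{-1}$ of a co-zero subset of $X_{\alpha_{k+1}}$, and therefore also $(p_\gamma|X)^{-1}(E_k)$ for a suitable open $E_k\subseteq X_\gamma$, where $\gamma:=\sup_n\alpha_n$, which lies in $A$ by $(4)$.

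Now consider a play consistent with $\sigma$ and put $Z:=\underleftarrow{\lim}\{X_{\alpha_n},p^{\alpha_{n+1}}_{\alpha_n}\}$, which is separable metrizable; by $(5)$ and $(6)$, $X_\gamma=p_\gamma(X)$ is a dense subset of $Z$, and the limit projections $q_n\colon Z\to X_{\alpha_n}$ restrict on $X_\gamma$ to the maps $p^\gamma_{\alpha_n}$. Since $\bigcup_k B_k=(p_\gamma|X)^{-1}\big(\bigcup_k E_k\big)$ and $p_\gamma|X$ is a skeletal surjection, by the first paragraph it suffices to show that $\bigcup_k E_k$ is dense in $X_\gamma$, hence (as $X_\gamma$ is dense in $Z$) that the open sets $\tilde E_k\subseteq Z$ with $\tilde E_k\cap X_\gamma=E_k$ have dense union in $Z$. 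The sets $q_i^{-1}(V^{\alpha_i}_j)$, $i,j\in\IN$, form a base of $Z$, so fix a non-empty one; then $V^{\alpha_i}_j\ne\varnothing$ and we may choose $n>i$ with $e(n)=(i,j)$, so that Player I's round-$n$ move was $A_n=(p_{\alpha_i}|X)^{-1}(V^{\alpha_i}_j)=(p_\gamma|X)^{-1}\big(q_i^{-1}(V^{\alpha_i}_j)\cap X_\gamma\big)$. Player II replied with a non-empty $B_n\subseteq A_n$; applying the surjection $p_\gamma|X$ gives $\varnothing\ne E_n\subseteq q_i^{-1}(V^{\alpha_i}_j)\cap X_\gamma$. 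So $\tilde E_n$ is non-empty and open, and since $E_n$ is dense in $\tilde E_n$ (because $X_\gamma$ is dense in $Z$) we get $\tilde E_n\subseteq\overline{E_n}\subseteq\overline{q_i^{-1}(V^{\alpha_i}_j)}$ (closures in $Z$); as an open set is dense in its closure, $\tilde E_n$ meets $q_i^{-1}(V^{\alpha_i}_j)$. Thus $\bigcup_k\tilde E_k$ meets every non-empty basic open subset of $Z$, which is the required density.

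The point demanding the most care is the bookkeeping built into $\sigma$: it must simultaneously keep $\sigma$ a function of Player II's past moves alone, absorb the indices through which those moves factor (so that $\bigcup_k B_k$ is genuinely pulled back from a single level $\gamma$), and cycle through a base of the limit $Z$ — even though $\gamma$ and $Z$ are not known until the play is over. The companion device, that Player I neutralises Player II's freedom to pass to a smaller co-zero set by first confining its own move to the pull-back of a basic set and then invoking that a non-empty open subset of $\overline W$ must meet $W$, is what makes the strategy win.
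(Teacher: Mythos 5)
Your proof is correct and is, in substance, the argument the paper itself invokes: the paper's proof of this lemma consists of exactly your first observation (by condition (7), every co-zero set of $X$ has the form $(p_\alpha|X)^{-1}(V)$ for some $\alpha\in A$ and some co-zero $V\subset X_\alpha$) followed by a citation of the strategy construction from \cite[Theorem 2]{kp3}, which your second and third paragraphs — the bookkeeping of indices $\alpha_n$, the passage to $\gamma=\sup_n\alpha_n$ and the limit $Z$ over the chain, and the density argument via the base $q_i^{-1}(V^{\alpha_i}_j)$ — reconstruct explicitly and correctly. The only slip is the self-reference ``by Lemma 2.4'': the fact that the limit projections $p_\alpha|X$ of an almost limit of a system with skeletal bonding maps are skeletal is Lemma 2.3 of the paper.
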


\begin{proof}
Let $X=\displaystyle\mathrm{a}-\underleftarrow{\lim}S$, where
$S=\{X_\alpha, p^{\beta}_\alpha, A\}$ satisfies conditions (3)-(7).
Condition (7) implies that for every co-zero set $U\subset X$ there
exists $\alpha\in A$ and a co-zero set $V\subset X_\alpha$ with
$U=p_\alpha^{-1}(V)$. So, $\Sigma_X$ is the family of all
$p_\alpha^{-1}(V)$, where $\alpha\in A$ and $V$ is open in
$X_\alpha$. Using this observation, we can apply the arguments from
the proof of \cite[Theorem 2]{kp3} to define a winning strategy
$\sigma:\bigcup\{\Sigma_X^n:n\geq 0\}\to\Sigma_X$.
\end{proof}

We are going to show that every compactum $X$ which is
$\mathrm{I}$-favorable with respect to co-zero sets can be
represented as a limit of a continuous system with skeletal bonding
maps and $\mathrm{I}$-favorable spaces with respect to co-zero sets
of weight less than the weight $w(X)$ of $X$.

Let us introduced few notations. Suppose $X\subset\mathbb I^A$ is a
compact space and $B\subset A$. Let $\pi_B\colon\mathbb
I^A\to\mathbb I^B$ be the natural projection and $p_B$ be
restriction map $\pi_B|X$. Let also $X_B=p_B(X)$. If $U\subset X$ we
write $B\in k(U)$ to denote that $p_{B}^{-1}\big(p_{B}(U)\big)=U$.
For every co-zero set $U\subset X$ there exist a countable $B\subset
A$ such that $B\in k(U)$ with $p(U)$ being a co-zero set in $X_{B}$.
A base $\mathcal B$ for the topology of $X\subset\mathbb I^A$
consisting of co-zero sets is called {\em special} if for every
finite $B\subset A$ the family $\{p_B(U):U\in\mathcal B, B\in
k(U)\}$ is a base for $p_B(X)$.

\begin{pro}
Let $X\subset\mathbb I^A$ be a compactum and $\mathcal B$ a special
base for $X$. If $\sigma:\bigcup\{\mathcal B^n:n\geq 0\}\to\mathcal
B$ is a function such that for each game
$$\big(\sigma(\varnothing),U_0,\sigma(U_0),U_1,\sigma(U_0,U_1),U_2,...,U_n,\sigma(U_0,U_1,..,U_n),U_{n+1},,,\big),$$
where $\sigma(\varnothing)\in\mathcal B$, $U_i\in\mathcal B$ and
$U_{i+1}\subset\sigma(U_0,U_1),U_2,...,U_i)$  for all $i\geq 0$, the
union $\bigcup_{n\geq 0}U_n$ is dense in $X$, then $X$ is skeletally
generated.
\end{pro}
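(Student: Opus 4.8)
The plan is to build the inverse system $S=\{X_B, p^{B'}_B, \mathcal{A}\}$ directly from the combinatorics of the special base $\mathcal{B}$ and the winning strategy $\sigma$. The index set $\mathcal{A}$ will consist of all countable subsets $B\subset A$ that are "closed" under $\sigma$ in an appropriate sense: given the observation that every member of $\mathcal{B}$ is supported on a countable subset of $A$ (i.e.\ for $U\in\mathcal{B}$ there is a countable $B$ with $B\in k(U)$ and $p_B(U)$ co-zero in $X_B$), I would call $B$ \emph{$\sigma$-admissible} if for every finite sequence $U_0,\dots,U_n$ from $\mathcal{B}$ with each $U_i$ supported on $B$ and forming a legal play, the set $\sigma(U_0,\dots,U_n)$ is again supported on $B$. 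A standard closing-off argument of length $\omega$ shows every countable subset of $A$ extends to a countable $\sigma$-admissible one, so $\mathcal{A}$ is cofinal in the countable subsets of $A$ ordered by inclusion, and is $\sigma$-complete: the union of a countable chain of $\sigma$-admissible sets is $\sigma$-admissible, because any legal play supported on the union uses only finitely many members of $\mathcal{B}$, hence is supported on one set of the chain. This gives conditions (4); the limit of the subsystem over a countable chain is exactly $X_{\sup}$ since the projections $p_B$ factor compatibly, giving (5)--(6); and (7) holds because every bounded $f\colon X\to\mathbb{R}$ is determined by a co-zero set decomposition and thus factors through some $X_B$ with $B$ countable, which we then enlarge to a $\sigma$-admissible index.

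The heart of the matter is condition (3): the bonding maps $p^{B'}_B\colon X_{B'}\to X_B$ for $B\subset B'$ in $\mathcal{A}$ must be skeletal. Here I would use Lemma 2.2(iii): it suffices to show that every nonempty open $W\subset X_{B'}$ contains an open $V$ with $p^{B'}_B(V)$ dense in some open subset of $X_B$. Since $\mathcal{B}$ is special, I may assume $W=p_{B'}(U_0)$ for some $U_0\in\mathcal{B}$ with $B'\in k(U_0)$. Now run the strategy \emph{inside the index $B$}: because $B$ is $\sigma$-admissible, the sequence $\sigma(\varnothing),\,U_0\cap\sigma(\varnothing)',\dots$ of responses stays supported on $B$ whenever the challenges do — more precisely, I would construct a play in which Player II always plays inside $W$ pulled back to $X$, so all the sets $U_n$ land inside $U_0$ and are supported on $B'$, while Player I's responses $\sigma(\dots)$ are supported on $B$. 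By the hypothesis the union $\bigcup U_n$ is dense in $X$, but all $U_n\subset U_0$, which forces $U_0$ (hence $W$) to be dense — that is too strong, so the correct move is subtler: I run the game only to capture one dense-in-an-open-set witness. Concretely, set $U_1\subset U_0$ to be any member of $\mathcal{B}$ inside $\sigma(U_0)\cap U_0$ that is supported on $B$; then continue the game entirely within $U_1$, and density of $\bigcup_{n\ge 1}U_n$ in $X$ combined with $U_n\subset U_1\subset U_0$ forces $U_1$ dense in the open set $\mathrm{Int}\,\overline{U_1}$, while $U_1$ being $B$-supported means $p_B(U_1)$ is open in $X_B$ and $p^{B'}_B(p_{B'}(U_1))=p_B(U_1)$; taking $V=p_{B'}(U_1)$ finishes the skeletal check.

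The main obstacle I anticipate is precisely making the argument in the previous paragraph honest: density of $\bigcup U_n$ in $X$ is a global statement, and I must extract from it a \emph{local} skeletality statement without the naive collapse $U_0$ dense. The right device is to exploit that $\sigma$ is a strategy on \emph{all} legal plays, so I can reset the game after the first move — play $U_0$, receive $\sigma(U_0)$, then treat the subgame below $\sigma(U_0)$ as a fresh instance and use that Player I still wins it, yielding a dense union inside $\sigma(U_0)$; iterating and using the special-base property to keep everything finitely supported on $B'$ and the responses on $B$, I get that $p^{B'}_B$ maps a suitable open subset of any given open set onto a set dense in an open subset of $X_B$. A secondary technical point is checking that the spaces $X_B$ are separable metric — immediate since $B$ is countable and $X\subset\mathbb{I}^A$ — and that the embedding $X\hookrightarrow\varprojlim S$ with $p_B(X)=X_B$ holds, which follows from $\mathcal{A}$ being cofinal among countable subsets together with $X\subset\mathbb{I}^A$ being determined by its countable projections.
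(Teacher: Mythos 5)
Your overall architecture matches the paper's: index the system by countable subsets of $A$ closed under $\sigma$ (the paper's family $\Gamma$), verify cofinality and $\sigma$-completeness by a closing-off argument, and obtain factorization of bounded functions from the countability of the indices. The genuine gap is in the skeletality argument, which is the heart of the proposition, and the devices you propose there do not work. (a) You want to pick $U_1\subset\sigma(U_0)\cap U_0$ supported on the smaller index $B$; but $\sigma(U_0)\cap U_0$ may be empty (the strategy need not respond inside Player II's move), and even if it is nonempty there is no reason $U_0$ should contain a nonempty $B$-supported basic set --- if it always did, $p^{B'}_B(W)$ would contain an open set and skeletality would be trivial, so this assumes away the whole problem. (b) ``Continue the game entirely within $U_1$'' is not something Player II can arrange: the legality constraint is $U_{i+1}\subset\sigma(U_0,\dots,U_i)$, and the responses $\sigma(\dots)$ can leave $U_1$. (c) The conclusion ``$U_1$ is dense in $\mathrm{Int}\,\overline{U_1}$'' is true of any set and proves nothing, while ``all $U_n\subset U_1$ and $\bigcup U_n$ dense in $X$'' gives $U_1$ dense in $X$, the absurdity you already flagged. (d) ``Reset the game and use that Player I still wins the subgame below $\sigma(U_0)$, yielding a dense union inside $\sigma(U_0)$'' is not available from the hypothesis: the winning condition gives density of $\bigcup U_n$ in $X$ only, and no relativized version is assumed.

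The missing idea is to argue by contradiction, playing an adversarial Player II against $\sigma$. Fix $B\in\Gamma$ and an open $V\subset X$, and suppose that for \emph{every} finite legal play $U_0,\dots,U_n$ consisting of $B$-supported basic sets there is a $B$-supported basic $U\subset\sigma(U_0,\dots,U_n)$ with $p_B(U)\cap p_B(V)=\varnothing$; such candidates $U$ are available because $B\in k\big(\sigma(U_0,\dots,U_n)\big)$ by the closure property defining $\Gamma$, and because the $B$-supported basic sets project to a base of $X_B$ by the special-base hypothesis. Iterating produces an infinite legal play all of whose moves satisfy $p_B(U_i)\cap p_B(V)=\varnothing$, hence $U_i\cap V=\varnothing$ since $B\in k(U_i)$, so $\bigcup_{i\ge 0}U_i$ misses $V$ --- contradicting the assumption on $\sigma$. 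Therefore some finite play $U_0,\dots,U_n$ has the property that every $B$-supported basic $U\subset\sigma(U_0,\dots,U_n)$ meets $V$ in the $B$-projection, which forces the nonempty open set $p_B\big(\sigma(U_0,\dots,U_n)\big)$ to lie inside $\overline{p_B(V)}$; that is skeletality of $p_B$, and skeletality of the bonding maps follows. This contrapositive use of the global density condition is exactly the local extraction you were looking for, and it is the step your proposal is missing.
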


\begin{proof}
For any finite set $B\subset A$ fix a countable family
$\lambda_B\subset\mathcal B$ such that $\{p_B(U):U\in\lambda_B\}$ is
a base for $X_B$ and $B\in k(U)$ for every $U\in\lambda_B$. Let
$\gamma_B=\bigcup\{\lambda_H:H\subset B\}$ and $\Gamma$ be the
family of all countable sets $B\subset A$ satisfying the following
condition:
\begin{itemize}
\item If $C\subset B$ is finite and $U_0,U_1,...,U_n\in\gamma_C$, $n\geq 0$, then
$B\in k\big(\sigma(U_0,U_1,..,U_n)\big)$.
\end{itemize}
Obviously, if $B_1\subset B_2\subset..$ is a chain in $\Gamma$, then
$\bigcup_{i\geq 1}B_i\in\Gamma$. We claim that
$X=\displaystyle\underleftarrow{\lim}\{X_B, p^C_B, B\subset C,
\Gamma\}$. It suffices to show that every countable subset of $A$ is
contained in an element of $\Gamma$. To this end, let $B_0\subset A$
be countable. Construct by induction countable sets $B(m)\subset A$
such that for all $m\geq 0$ we have:
\begin{itemize}
\item $B_0\subset B(m)\subset B(m+1)$;
\item $B(m+1)\in k\big(\sigma(U_0,U_1,..,U_n)\big)$, where $U_0,U_1,..,U_n\in\gamma_C$ with
$n\geq 0$ and $C\subset B(m)$ finite.
\end{itemize}

Suppose $B(j)$, $j\leq m$, are already constructed for some $m\geq
1$. For every finite $C\subset B(m)$ and
$U_0,U_1,..,U_n\in\gamma_{C}$ there exist a countable set
$B(U_0,U_1,..,U_n)\subset A$ with $B(U_0,U_1,..,U_n)\in
k\big(\sigma(U_0,U_1,..,U_n)\big)$. Let $B(m+1)$ be the union of
$B(m)$ and all $B(U_0,U_1,..,U_n)$, where
$U_0,U_1,..,U_n\in\gamma_C$ with $C$ being a finite subset of $B(m)$
and $n\geq 0$. Obviously $B(m+1)$ is countable and satisfies the
required conditions. This completes the inductive step. Finally,
$B_\infty=\cup_{m=0}^\infty B(m)$ belongs to $\Gamma$. Hence,
$X=\displaystyle\underleftarrow{\lim}\{X_B, p^C_B, B\subset C,
\Gamma\}$.

Next two claims complete the proof of Proposition 2.5.

\smallskip\noindent
{\em Claim $1.$ If $B\in\Gamma$, then for each open $V\subset X$
there exists a finite set $C\subset B$ and a finite family
$U_0,U_1,..,U_n\in\gamma_C$ such that $p_B(U)\cap
p_B(V)\neq\varnothing$ for any $U\in\gamma_H$, where $H\subset B$ is
finite and $U\subset\sigma(U_0,U_1,..,U_n)$. }
\smallskip

Assume Claim 1 does not hold. Then there exists an open set
$V\subset X$ such that for any finite $C\subset B$ and any
$U_0,U_1,..,U_n\in\gamma_C$ there exists finite $H\subset B$ and
$U\in\gamma_H$ such that $U\subset\sigma(U_0,U_1,..,U_n)$ and
$p_B(U)\cap p_B(V)=\varnothing$. This allows us to construct by
induction a sequence $\{C(m)\}_{m\geq 0}$ of finite subsets of $B$
and families $\{U_0,U_1,..,U_m\}\subset\gamma_{C(m)}$ such that
$U_{m}\subset\sigma(U_0,U_1,..,U_{m-1})$ and $p_B(U_{m})\cap
p_B(V)=\varnothing$. Indeed, we take $\sigma(\varnothing)\in\mathcal
B$ with $B\in k(\sigma(\varnothing))$ and suppose the sets
$C(1),...,C(m)$ and the families
$\{U_0,U_1,..,U_m\}\subset\gamma_{C(m)}$ satisfying the above
conditions are already constructed. Consequently, there exists
$U_{m+1}\in\gamma_D$, where $D\subset B$ is finite, such that
$U_{m+1}\subset \sigma(U_0,U_1,..,U_m)$ and $p_B(U_{m+1})\cap
p_B(V)=\varnothing$. Observe that both
$\{U_0,U_1,..,U_m\}\subset\gamma_{C(m)}$ and $U_{m+1}\in\gamma_D$
implies the inclusion $\{U_0,U_1,..,U_m,
U_{m+1}\}\subset\gamma_{C(m+1)}$, where $C(m+1)=C(m)\cup D$. This
completes the inductive step. So, we obtained a sequence
$$\sigma(\varnothing),U_0,\sigma(U_0),U_1,\sigma(U_0,U_1),U_2,...,U_n,\sigma(U_0,U_1,..,U_n),U_{n+1},..$$
from $\mathcal B$ such that
$U_{i+1}\subset\sigma(U_0,U_1,U_2,...,U_i)$, $B\in k(U_i)$ and
$p_B(U_i)\cap p_B(V)=\varnothing$ for all $i$. The last two
conditions yields $U_i\cap V=\varnothing$ for all $i\ge 0$ which
contradicts the density of the set $\bigcup_{i\geq 0}U_i$ in $X$.

\smallskip\noindent
{\em Claim $2.$ $p_B$ is a skeletal map for each $B\in\Gamma$.}
\smallskip

Suppose $V\subset X$ is open. Then there a finite set $C\subset B$
and a family $U_0,U_1,..,U_n\in\gamma_C$ satisfying the conditions
from Claim 1. Since $B\in k\big(\sigma(U_0,U_1,..,U_m)\big)$,
$p_B\big(\sigma(U_0,U_1,..,U_m)\big)$ is open in $X_B$. Hence, it
suffices to show the inclusion
$p_B\big(\sigma(U_0,U_1,..,U_m)\big)\subset\overline{p_B(V)}$.
Assuming the contrary, we obtain that
$p_B\big(\sigma(U_0,U_1,..,U_m)\big)\backslash\overline{p_B(V)}$ is
a non-empty open subset of $X_B$. Moreover,
$\bigcup\{p_B(\gamma_C):C\subset B{~}\mbox{is finite}\}$ is a base
for $X_B$. Therefore, there is $U\in\gamma_C$ with $C\subset B$
finite such that $p_B(U)$ is contained in
$p_B\big(\sigma(U_0,U_1,..,U_m)\big)\backslash\overline{p_B(V)}$.
Consequently, $U\subset\sigma(U_0,U_1,..,U_m)$ and $p_B(U)\cap
p_B(V)=\varnothing$, a contradiction.
\end{proof}

\begin{thm}
Let $X$ be a compact $\mathrm{I}$-favorable space with respect to
co-zero sets and $w(X)=\tau$ is uncountable. Then there exists a
continuous inverse system $S=\{X_\alpha, p^{\beta}_\alpha, \tau\}$
of compact $\mathrm{I}$-favorable spaces $X_\alpha$ with respect to
co-zero sets and skeletal bonding maps $p^{\beta}_\alpha$ such that
$w(X_\alpha)<\tau$ for each $\alpha<\tau$ and
$X=\displaystyle\underleftarrow{\lim}S$.
\end{thm}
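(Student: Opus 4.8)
The strategy is to realize $X$ as the limit of a suitable factorizing system indexed by a $\sigma$-complete family of countable subsets of a coordinate set, exactly as in Proposition~2.5, and then to thin that system down to a well-ordered \emph{continuous} system of length $\tau$ whose factors have weight $<\tau$. First I would embed $X$ in $\mathbb I^A$ with $\card A=\tau$ and fix a special base $\mathcal B$ for $X$ of co-zero sets (such a base exists because $X$ is compact of weight $\tau$ and every co-zero set is determined by countably many coordinates). Since $X$ is $\mathrm I$-favorable with respect to co-zero sets and every co-zero set has the form $p_B^{-1}(V)$ for countable $B$, a winning strategy $\sigma$ for Player~I may be chosen with values in $\mathcal B$ and depending only on arguments from $\mathcal B$; then Proposition~2.5 applies and yields a $\sigma$-complete family $\Gamma$ of countable subsets of $A$ with $X=\underleftarrow{\lim}\{X_B,p^C_B,\Gamma\}$ and every $p_B$ skeletal.

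Next I would build an increasing $\tau$-sequence $\{A_\alpha:\alpha<\tau\}$ of subsets of $A$ that is continuous (i.e.\ $A_\gamma=\bigcup_{\alpha<\gamma}A_\alpha$ at limit $\gamma$), cofinal in the subsets of $A$ of size $<\tau$, with $\card A_\alpha<\tau$ for every $\alpha$, and — crucially — with each $A_\alpha$ closed under the ``saturation'' operation coming from $\Gamma$: whenever $C\subset A_\alpha$ is finite and $U_0,\dots,U_n\in\gamma_C$, the countable set witnessing $B\in k(\sigma(U_0,\dots,U_n))$ can be taken inside $A_\alpha$. This is possible by the usual closing-off argument because $\tau$ is uncountable (so $\tau$-many countable ``requirements'' at a stage of size $<\tau$ still produce a set of size $<\tau$), and by continuity each $A_\alpha$ itself then lies in $\Gamma$, just as $B_\infty$ did in the proof of Proposition~2.5. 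Put $X_\alpha=p_{A_\alpha}(X)$ and let $p^\beta_\alpha=p^{A_\beta}_{A_\alpha}$. Then $w(X_\alpha)\le\card A_\alpha<\tau$, the system $S=\{X_\alpha,p^\beta_\alpha,\tau\}$ is continuous because the $A_\alpha$'s are continuous and increasing, $X=\underleftarrow{\lim}S$ since $\{A_\alpha\}$ is cofinal in the countable (hence in all $<\tau$) subsets appearing in $\Gamma$, and each $p^\beta_\alpha$ is skeletal by Claim~2 of Proposition~2.5 applied inside the relevant $A_\beta\in\Gamma$ (alternatively by Lemma~2.3, since $p^\beta_\alpha$ is a bonding map between limits of skeletal systems). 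It remains to check that each $X_\alpha$ is itself $\mathrm I$-favorable with respect to co-zero sets: this follows because $p_\alpha\colon X\to X_\alpha$ is a surjective skeletal (indeed closed) map and a skeletal image of an $\mathrm I$-favorable-with-respect-to-co-zero-sets compactum is again such — one transports the strategy $\sigma$ downward, using that for a co-zero $V\subset X_\alpha$ the set $p_\alpha^{-1}(V)$ is co-zero in $X$ and that skeletality guarantees $\overline{p_\alpha(W)}$ has non-empty interior, so density of $\bigcup_n W_n$ in $X$ forces density of $\bigcup_n p_\alpha(W_n)$ in $X_\alpha$.

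The main obstacle I expect is the simultaneous bookkeeping in the closing-off: one must arrange $A_\alpha\in\Gamma$ (so that $p_{A_\alpha}$ is defined and skeletal), keep $\card A_\alpha<\tau$, maintain continuity at limits, \emph{and} keep the family cofinal enough that the limit of the resulting short system is genuinely $X$ and not merely a factor of it. Continuity at limits is automatic from the union construction, but the closure condition defining $\Gamma$ refers to \emph{all} finite $C\subset B$ and \emph{all} finite tuples from $\gamma_C$; handling this for a set of size $<\tau$ produces only $<\tau$ new requirements precisely because $\tau$ is uncountable and $\gamma_C$ is countable for finite $C$, so the induction goes through. A secondary point requiring care is the passage from an arbitrary winning strategy in $\Sigma_X$ to one valued in the special base $\mathcal B$; here one uses that $\mathcal B$ is a $\pi$-base of co-zero sets, shrinks $\sigma(\varnothing)$ and each $\sigma(U_0,\dots,U_n)$ to a member of $\mathcal B$ contained in it, and notes that shrinking the strategist's moves cannot destroy density of Player~II's union, so the new function is still a winning strategy and Proposition~2.5 is applicable.
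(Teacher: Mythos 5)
Your proposal follows essentially the same architecture as the paper's proof: embed $X$ in $\mathbb I^{A}$ with $\mathrm{card}(A)=\tau$, run a closing-off induction to produce an increasing, continuous chain of coordinate sets of size $<\tau$ that are saturated under the winning strategy (so that each $A_\alpha\in k\big(\sigma(U_0,\dots,U_n)\big)$ for the relevant tuples), obtain continuity and $X=\underleftarrow{\lim}S$ from cofinality, and get skeletality of the projections from the Claim~2 mechanism of Proposition~2.5 (the paper packages the same idea as the $\sigma$-stable families $\mathcal U(\alpha)$ in its Claim~4). The cardinality bookkeeping and the reduction of $\sigma$ to a strategy valued in a special base by shrinking its values are both fine. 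Two points deserve correction. First, your statement that each $A_\alpha$ ``lies in $\Gamma$'' is literally false for $\tau>\omega_1$, since $\Gamma$ consists of countable sets; what your closure condition actually gives, and what the Claim~1/Claim~2 argument actually uses, is that $A_\alpha$ satisfies the defining property of $\Gamma$ with countability dropped, so you should say that instead. Second, and more substantively, you establish $\mathrm I$-favorability of the factors $X_\alpha$ by appealing to preservation under skeletal images of compacta, whereas the paper pushes $\sigma$ forward onto the special base $p_\alpha(\mathcal U(\alpha))$, applies Proposition~2.5 to conclude that $X_\alpha$ is skeletally generated, and then invokes Lemma~2.4. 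Your route is legitimate, but as written the simulation runs in the wrong direction: in the game on $X_\alpha$ Player~II moves \emph{downstairs}, so Player~I must simulate the game \emph{upstairs} in $X$, announce in $X_\alpha$ a non-empty co-zero subset $V_n$ of $\mathrm{Int}\,\overline{p_\alpha(A_n)}$ (non-empty because $p_\alpha$ is closed and skeletal), translate Player~II's reply $W_n\subset V_n$ into the legal upstairs move $B_n=A_n\cap p_\alpha^{-1}(W_n)$ (non-empty because $W_n$ meets $p_\alpha(A_n)$), and finally push the density of $\bigcup_n B_n$ forward through the closed surjection $p_\alpha$ to get density of $\bigcup_n W_n$. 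With that argument spelled out your proof is complete; the comparative advantage of the paper's route is that it needs no separate image-preservation lemma, while yours isolates a reusable fact about skeletal images.
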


\begin{proof}
Let $\sigma:\bigcup\{\Sigma_X^n:n\geq 0\}\to\Sigma_X$, where
$\Sigma_X$ is the family of all co-zero sets in $X$, be a winning
strategy for Player I. We embed $X$ in a Tychonoff cube $\mathbb
I^A$ with $|A|=\tau$ and fix a base $\{U_\alpha:\alpha<\tau\}$ for
$X$ of cardinality $\tau$ which consists of co-zero sets such that
for each $\alpha$ there exists a finite set $H_\alpha$ with
$H_\alpha\in k(U_\alpha)$. For any finite set $C\subset A$ let
$\gamma_C$ be a fixed countable base for $X_C$. Observe that for
every $U\in\Sigma_X$ there exists a countable set $B(U)\subset A$
such that $B(U)\in k(U)$ and $p_{B(U)}(U)$ is a co-zero set in
$X_{B(U)}$. This follows from the fact that each continuous function
$f$ on $X$ can be represented in the form $f=g\circ p_B$ with
$B\subset A$ countable and $g$ being a continuous function on $X_B$.
We identify $A$ with all infinite cardinals $\alpha<\tau$ and
construct by transfinite induction subsets $A(\alpha)\subset A$ and
families $\mathcal U(\alpha)\subset\Sigma_X$ satisfying the
following conditions:
\begin{itemize}
\item[(8)] $|A(\alpha)|\leq\alpha$ and $|\mathcal
U(\alpha)|\leq\alpha$;
\item[(9)]  $A(\alpha)\in k(U)$ for all $U\in\mathcal
U(\alpha)$;
\item[(10)] $p_C^{-1}(\gamma_C)\subset\mathcal
U(\alpha)$ for  each finite $C\subset A(\alpha)$;
\item[(11)]  $\{U_\beta:\beta<\alpha\}\subset\mathcal
U(\alpha)$ and $\{\beta:\beta<\alpha\}\subset A(\alpha)$;
\item[(12)]  $\sigma(U_1,..,U_n)\in\mathcal
U(\alpha)$ for every finite family $\{U_1,..,U_n\}\subset\mathcal
U(\alpha)$;
\item[(13)]  $A(\alpha)=\bigcup\{A(\beta):\beta<\alpha\}$ and $\mathcal
U(\alpha)=\bigcup\{\mathcal U(\beta):\beta<\alpha\}$ for all limit
cardinals $\alpha$.
\end{itemize}
Suppose all $A(\beta)$ and $\mathcal U(\beta)$, $\beta<\alpha$, have
already been constructed for some $\alpha<\tau$. If $\alpha$ is a
limit cardinal, we put $A(\alpha)=\bigcup\{A(\beta):\beta<\alpha\}$
and $\mathcal U(\alpha)=\bigcup\{\mathcal U(\beta):\beta<\alpha\}$.
If $\alpha=\beta+1$, we construct by induction a sequence
$\{C(m)\}_{m\geq 0}$ of subsets of $A$, and a sequence $\{\mathcal
V_m\}_{m\geq 0}$ of co-zero families in $X$ such that:
\begin{itemize}
\item $C_0=A(\beta)\cup\{\beta\}$ and $\mathcal V_0=\mathcal U(\beta)\cup\{U_\beta\}$;
\item $C(m+1)=C(m)\bigcup\{B(U):U\in\mathcal V_m\}$;
\item $\mathcal V_{2m+1}=\mathcal V_{2m}\bigcup\{\sigma(U_1,..,U_s): U_1,..,U_s\in\mathcal
V_{2m}, s\geq 1\}$;
\item $\mathcal V_{2m+2}=\mathcal
V_{2m+1}\bigcup\{p_C^{-1}(\gamma_C):C\subset C(2m+1){~}\mbox{is
finite}\}$.
\end{itemize}

Now, we define $A(\alpha)=\bigcup_{m\geq 0}C(m)$ and $\mathcal
U(\alpha)=\bigcup_{m\geq 0}\mathcal V_m$. It is easily seen that
$A(\alpha)$ and $\mathcal U(\alpha)$ satisfy conditions (8)-(13).

For every $\alpha<\tau$ let $X_\alpha=X_{A(\alpha)}$ and
$p_\alpha=p_{A(\alpha)}$.  Moreover, if $\alpha<\beta$, we have
$A(\alpha)\subset A(\beta)$. In such a situation let
$p^\beta_\alpha=p^{A(\beta)}_{A(\alpha)}$. Since
$A=\bigcup_{\alpha<\tau}A(\alpha)$, we obtain a continuous inverse
system $S=\{X_\alpha, p^{\beta}_\alpha, \tau\}$ whose limit is $X$.
Observe also that each $X_\alpha$ is of weight $<\tau$ because
$p_\alpha(\mathcal U(\alpha))$ is a base for $X_\alpha$ (see
condition (10)).

\smallskip
{\em Claim $3.$ Each $X_\alpha$ is $\mathrm{I}$-favorable with
respect to co-zero sets.}
\smallskip

Indeed, by conditions (9)-(10), $\mathcal B_\alpha=p_\alpha(\mathcal
U(\alpha))$ is a special base for $X_\alpha$ consisting of co-zero
sets. We define a function $\sigma_\alpha:\bigcup\{\mathcal
B_\alpha^n:n\geq 0\}\to\mathcal B_\alpha$ by
$$\sigma_\alpha(p_\alpha\big(U_0),p_\alpha(U_1),..,p_\alpha(U_n)\big)=p_\alpha\big(\sigma(U_0,U_1,..,U_n)\big).$$
This definition is correct because of conditions (9) and (12).
Condition (9) implies that $\sigma_\alpha$ satisfies the hypotheses
of Proposition 2.5. Hence, according to this proposition, $X_\alpha$
is skeletally generated. Finally, by Lemma 2.4, $X_\alpha$ is
$\mathrm{I}$-favorable with respect to co-zero sets.

\smallskip
{\em Claim $4.$ All bonding maps $p^\beta_\alpha$ are skeletal.}
\smallskip

It suffices to show that all $p_\alpha$ are skeletal. And this is
really true because each family $\mathcal U(\alpha)$ is stable with
respect to $\sigma$, see (12). Hence, by \cite[Lemma 9]{kp1}, for
every open set $V\subset X$ there exists $W\in\mathcal U(\alpha)$
such that whenever $U\subset W$ and $U\in\mathcal U(\alpha)$ we have
$V\cap U\neq\varnothing$. The last statement yields that $p_\alpha$
is skeletal. Indeed, let $V\subset X$ be open, and $W\in\mathcal
U(\alpha)$ be as above. Then $p_\alpha(W)$ is a co-zero set in
$X_\alpha$ because of condition (9). We claim that
$p_\alpha(W)\subset\overline{p_\alpha(V)}$. Otherwise,
$p_\alpha(W)\backslash\overline{p_\alpha(V)}$ would be a non-empty
open subset of $X_\alpha$. So, $p_\alpha(U)\subset
p_\alpha(W)\backslash\overline{p_\alpha(V)}$ for some $U\in\mathcal
U(\alpha)$ (recall that $p_\alpha(\mathcal U(\alpha))$ is a base for
$X_\alpha$). Since, by (9), $p_\alpha^{-1}(p_\alpha(U))=U$ and
$p_\alpha^{-1}(p_\alpha(W))=W$, we obtain $U\subset W$ and $U\cap
V=\varnothing$ which is a contradiction.
\end{proof}


\section{Proof of Theorem 1.1 and Corollaries 1.2 - 1.3}

Suppose $X=\displaystyle\mathrm{a}-\underleftarrow{\lim}S$ with
$S=\{X_\alpha, p^{\beta}_\alpha, \alpha<\beta<\tau\}$ being almost
continuous, and $H\subset X$. The set
$$q(H)=\{\alpha:\mathrm{Int}\big(\big((p^{\alpha+1}_{\alpha})^{-1}(\overline{p_{\alpha}(H)})\big)
\backslash\overline{p_{\alpha+1}(H)}\big)\neq\varnothing\}$$ is
called a {\em rank of $H$}.

\begin{lem}
Let $X=\displaystyle\mathrm{a}-\underleftarrow{\lim}S$ and $U\subset
X$ be open, where $S=\{X_\alpha, p^{\beta}_\alpha,
\alpha<\beta<\tau\}$ is almost continuous with skeletal bonding
maps. Then we have:
\begin{itemize}
\item[(i)] $\alpha\not\in q(U)$ if and only if
$(p_\alpha^{\alpha+1})^{-1}\big(\mathrm{Int}\overline{p_\alpha(U)}\big)\subset\overline{p_{\alpha+1}(U)}$;
\item[(ii)] $q(U)\cap[\alpha,\tau)=\varnothing$ provided
$U=p_\alpha^{-1}(V)$ for some open $V\subset X_\alpha$.
\end{itemize}
\end{lem}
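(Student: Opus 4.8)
The plan is to extract one elementary fact about skeletal maps and then read both parts off from it, with only point-set bookkeeping in between. The fact I would record first: \emph{if $g\colon Z\to Z'$ is skeletal and $F\subset Z'$ satisfies $\mathrm{Int}\,\overline{F}=\varnothing$, then $\mathrm{Int}\,g^{-1}(F)=\varnothing$} --- for if $N\subset g^{-1}(F)$ were nonempty and open, then $\overline{g(N)}\subset\overline{F}$, hence $\mathrm{Int}\,\overline{g(N)}\subset\mathrm{Int}\,\overline{F}=\varnothing$, contradicting skeletality of $g$. Secondly I would note that $C:=\overline{p_\alpha(U)}$ is regularly closed in $X_\alpha$: by Lemma 2.3 the restriction $p_\alpha|X\colon X\to X_\alpha$ is skeletal (the index set of $S$ is linearly ordered, hence directed), so the characterization in Lemma 2.2(ii) applies; consequently $C\setminus\mathrm{Int}\,C$ is a closed set with empty interior, i.e. nowhere dense. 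Throughout put $f=p^{\alpha+1}_\alpha$ and $D=\overline{p_{\alpha+1}(U)}$, so that the condition $\alpha\notin q(U)$ is by definition the assertion $\mathrm{Int}\big(f^{-1}(C)\setminus D\big)=\varnothing$.

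For (i), the implication $\alpha\notin q(U)\Rightarrow f^{-1}(\mathrm{Int}\,C)\subset D$ is immediate: if a point of $X_{\alpha+1}$ lay in $f^{-1}(\mathrm{Int}\,C)\setminus D$, then $f^{-1}(\mathrm{Int}\,C)\cap(X_{\alpha+1}\setminus D)$ would be a nonempty open subset of $f^{-1}(C)\setminus D$, so $\alpha\in q(U)$. For the converse, suppose $f^{-1}(\mathrm{Int}\,C)\subset D$. Then every point of $f^{-1}(C)\setminus D$ lies outside $f^{-1}(\mathrm{Int}\,C)$, whence
$$f^{-1}(C)\setminus D\;\subset\;f^{-1}(C)\setminus f^{-1}(\mathrm{Int}\,C)\;=\;f^{-1}\big(C\setminus\mathrm{Int}\,C\big);$$
since $C\setminus\mathrm{Int}\,C$ is nowhere dense, the fact above (with $g=f$) shows the right-hand side, and hence $f^{-1}(C)\setminus D$, has empty interior, i.e. $\alpha\notin q(U)$.

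For (ii), fix $\beta$ with $\alpha\le\beta<\tau$. Using $p_\alpha=p^{\beta}_\alpha\circ p_{\beta}$ and $p_\beta=p^{\beta+1}_\beta\circ p_{\beta+1}$ on $\underleftarrow{\lim}S$, together with $p_\gamma(X)=X_\gamma$ for every $\gamma$, I would write $U=(p_\beta|X)^{-1}(W)$ with $W:=(p^{\beta}_\alpha)^{-1}(V)$ open in $X_\beta$ (so $W=V$ when $\beta=\alpha$), and read off $p_\beta(U)=W$ and $p_{\beta+1}(U)=(p^{\beta+1}_\beta)^{-1}(W)$. By part (i) applied to the index $\beta$, the condition $\beta\notin q(U)$ becomes $g^{-1}(\mathrm{Int}\,\overline{W})\subset\overline{g^{-1}(W)}$, where $g=p^{\beta+1}_\beta$. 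This holds for any open $W$: as $W$ is open, $\overline{W}\setminus W$ is nowhere dense, hence so is its subset $\mathrm{Int}\,\overline{W}\setminus W$, so by the fact above $g^{-1}(\mathrm{Int}\,\overline{W})\setminus g^{-1}(W)=g^{-1}\big(\mathrm{Int}\,\overline{W}\setminus W\big)$ has empty interior; but $g^{-1}(\mathrm{Int}\,\overline{W})\setminus\overline{g^{-1}(W)}$ is an open subset of it, hence empty, which is exactly the required inclusion. Therefore $\beta\notin q(U)$ for all $\beta\in[\alpha,\tau)$.

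I expect the only genuinely load-bearing point to be the converse half of (i): there one really needs that $\overline{p_\alpha(U)}$ is regularly closed, so that the part $f$ collapses --- namely $f^{-1}(\overline{p_\alpha(U)})\setminus\overline{p_{\alpha+1}(U)}$ --- is trapped inside the preimage of a nowhere dense set, and it is precisely here that skeletality of the limit projections (Lemma 2.3) together with the characterization Lemma 2.2(ii) enters. Everything else --- the factorizations of the projections, the identifications $p_\beta(U)=W$ and $p_{\beta+1}(U)=g^{-1}(W)$, and the manipulations with interiors and closures --- is routine.
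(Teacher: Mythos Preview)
Your argument is correct. You organize both parts around a single observation --- that a skeletal map pulls back nowhere dense sets to sets with empty interior --- and this lets (i) and (ii) fall out uniformly. The paper, by contrast, dismisses (i) as following ``directly from the definition'' (though the converse implication, as you notice, does need skeletality of $p^{\alpha+1}_\alpha$), and for (ii) argues by direct contradiction: assuming $\beta\in q(U)$ for some $\beta\ge\alpha$, it takes the nonempty open $W=(p^{\beta+1}_\beta)^{-1}(\mathrm{Int}\,\overline{p_\beta(U)})\setminus\overline{p_{\beta+1}(U)}$, pushes it down by the skeletal $p^{\beta+1}_\beta$, uses that $p_\beta(U)=(p^\beta_\alpha)^{-1}(V)$ is open to force $p_\beta(U)\cap p^{\beta+1}_\beta(W)\neq\varnothing$, and then pulls back via $p_{\beta+1}(U)=(p^{\beta+1}_\beta)^{-1}(p_\beta(U))$ to obtain $W\cap p_{\beta+1}(U)\neq\varnothing$, contradicting the choice of $W$. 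Your route is cleaner and reusable; the paper's is more hands-on but tied to the specific setup. One minor over-complication in your write-up: the appeal to regular closedness of $C=\overline{p_\alpha(U)}$ is unnecessary, since for \emph{any} closed set $C$ the boundary $C\setminus\mathrm{Int}\,C$ is closed with empty interior, hence nowhere dense; so Lemmas~2.2(ii) and~2.3 are not actually load-bearing in (i) --- only the skeletality of the bonding map $p^{\alpha+1}_\alpha$ (given by hypothesis) is used.
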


\begin{proof}
The first item follows directly from the definition of $q(U)$. For
the second one, suppose $\beta\in q(U)$ for some $\beta\geq\alpha$.
Then
$W=(p_\beta^{\beta+1})^{-1}\big(\mathrm{Int}\overline{p_\beta(U)}\big)\backslash\overline{p_{\beta+1}(U)}\neq\varnothing$
is open in $X_{\beta+1}$. Since $p_\beta^{\beta+1}$ is skeletal,
$\mathrm{Int}\overline{p_\beta^{\beta+1}(W)}$ is a non-empty open
subset of $X_\beta$ which is contained in $\overline{p_\beta(U)}$.
Observe that $p_\beta(U)$ is open in $X_\beta$ because
$p_\beta(U)=(p_\beta^{\alpha})^{-1}(V)$. Hence, $p_\beta(U)\cap
p_\beta^{\beta+1}(W)\neq\varnothing$. The last relation implies
$W\cap p_{\beta+1}(U)\neq\varnothing$ since
$p_{\beta+1}(U)=\big(p^{\beta+1}_\alpha\big)^{-1}(V)=\big(p^{\beta+1}_\alpha\big)^{-1}\big(p_\beta(U)\big)$.
On the other hand, $W\cap p_{\beta+1}(U)=\varnothing$, a
contradiction.
\end{proof}

\begin{lem}
Let $S=\{X_\alpha, p^{\beta}_\alpha, 1\leq\alpha<\beta<\tau\}$ be an
inverse system with skeletal bonding maps and
$X=\displaystyle\underleftarrow{\lim}S$. Suppose $U\subset X$ is
open such that
$(p^\alpha_1)^{-1}\big(\mathrm{Int}\overline{p_1(U)}\big)\subset\mathrm{Int}\overline{p_{\alpha}(U)}$
for all $\alpha<\tau$. Then
$p_1^{-1}\big(\mathrm{Int}\overline{p_1(U)}\big)\subset\overline{U}$.
\end{lem}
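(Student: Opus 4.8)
The goal is to show that if $x\in p_1^{-1}\big(\mathrm{Int}\,\overline{p_1(U)}\big)$, then $x\in\overline U$, i.e.\ every basic open neighbourhood of $x$ in $X=\varprojlim S$ meets $U$. A basic neighbourhood has the form $p_\alpha^{-1}(W)$ for some $\alpha<\tau$ and some open $W\subset X_\alpha$ with $p_\alpha(x)\in W$. So the plan is: fix such $\alpha$ and $W$; we must produce a point of $U$ whose $\alpha$-th coordinate lies in $W$. Shrinking $W$, we may assume $W\subset\mathrm{Int}\,\overline{p_\alpha(U)}$, since by hypothesis $p_\alpha(x)\in (p^\alpha_1)^{-1}\big(\mathrm{Int}\,\overline{p_1(U)}\big)\subset\mathrm{Int}\,\overline{p_\alpha(U)}$, so the open set $W\cap\mathrm{Int}\,\overline{p_\alpha(U)}$ is a nonempty open subset of $X_\alpha$ still containing... well, it need not contain $p_\alpha(x)$, but it is a nonempty open subset of $\overline{p_\alpha(U)}$, hence meets $p_\alpha(U)$. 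That is already almost enough: $W\cap p_\alpha(U)\neq\varnothing$ gives a point $u\in U$ with $p_\alpha(u)\in W$, hence $u\in p_\alpha^{-1}(W)$, hence $p_\alpha^{-1}(W)\cap U\neq\varnothing$.

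Wait — that shows \emph{every} basic neighbourhood of the form $p_\alpha^{-1}(W)$ with $W$ meeting $\mathrm{Int}\,\overline{p_\alpha(U)}$ meets $U$, which is more than enough once we know $p_\alpha(x)\in\mathrm{Int}\,\overline{p_\alpha(U)}$ for every $\alpha$. So the real content is exactly the chain of inclusions in the hypothesis: from $x\in p_1^{-1}\big(\mathrm{Int}\,\overline{p_1(U)}\big)$ we get $p_1(x)\in\mathrm{Int}\,\overline{p_1(U)}$, hence $p_\alpha(x)\in (p^\alpha_1)^{-1}\big(\mathrm{Int}\,\overline{p_1(U)}\big)\subset\mathrm{Int}\,\overline{p_\alpha(U)}$ by the assumed inclusion, for every $\alpha<\tau$. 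Thus for each $\alpha$ and each open $W\ni p_\alpha(x)$ in $X_\alpha$, the set $W\cap\mathrm{Int}\,\overline{p_\alpha(U)}$ is a nonempty open subset of $\overline{p_\alpha(U)}$, so it meets $p_\alpha(U)$, producing $u\in U$ with $p_\alpha(u)\in W$; therefore $p_\alpha^{-1}(W)\cap U\neq\varnothing$. Since sets of the form $p_\alpha^{-1}(W)$ form a base at $x$ in $\varprojlim S$, we conclude $x\in\overline U$.

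The step that needs a little care — and which I expect is the only genuine subtlety — is the passage ``nonempty open subset of $\overline{p_\alpha(U)}$ meets $p_\alpha(U)$.'' This is just the fact that $p_\alpha(U)$ is dense in its own closure; no topological hypothesis is needed there. The skeletality of the bonding maps and almost-continuity do not actually enter this particular lemma directly: they are implicitly responsible for the standing hypothesis $(p^\alpha_1)^{-1}\big(\mathrm{Int}\,\overline{p_1(U)}\big)\subset\mathrm{Int}\,\overline{p_\alpha(U)}$ being a reasonable thing to assume (it is precisely the negation of ``$1\in q(U)$ propagates,'' compare Lemma~3.1(i)), but within the proof of Lemma~3.2 one simply uses it as given. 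So I would write the proof as: take $x$ with $p_1(x)\in\mathrm{Int}\,\overline{p_1(U)}$; for arbitrary $\alpha$, deduce $p_\alpha(x)\in\mathrm{Int}\,\overline{p_\alpha(U)}$ from the hypothesis; then for any open $W\ni p_\alpha(x)$ note $\varnothing\neq W\cap\mathrm{Int}\,\overline{p_\alpha(U)}\subset\overline{p_\alpha(U)}$ meets $p_\alpha(U)$, so $p_\alpha^{-1}(W)$ meets $U$; conclude $x\in\overline U$ since such $p_\alpha^{-1}(W)$ form a neighbourhood base at $x$.
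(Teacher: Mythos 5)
Your argument is correct and is essentially the paper's proof run forwards instead of by contradiction: the paper assumes a basic open set $p_\mu^{-1}(V)$ lies in $p_1^{-1}\big(\mathrm{Int}\overline{p_1(U)}\big)\setminus\overline{U}$, pushes $V$ into $\mathrm{Int}\overline{p_\mu(U)}$ via the hypothesis, and derives a contradiction from the same key fact you use (a nonempty open subset of $\overline{p_\alpha(U)}$ meets $p_\alpha(U)$, hence its preimage meets $U$). Your final paragraph is a complete and correct writeup; only it, not the exploratory discussion preceding it, belongs in the proof.
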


\begin{proof}
Suppose
$W=p_1^{-1}\big(\mathrm{Int}\overline{p_1(U)}\big)\backslash\overline{U}\neq\varnothing$.
Then there exists $\mu<\tau$ and open $V\subset X_\mu$ with
$p_\mu^{-1}(V)\subset W$. Hence
$p^\mu_1(V)\subset\mathrm{Int}\overline{p_1(U)}$, so $V\subset
(p^\mu_1)^{-1}\big(\mathrm{Int}\overline{p_1(U)}\big)\subset\mathrm{Int}\overline{p_\mu(U)}$.
The last inclusion implies that $p_\mu^{-1}(V)$ meets
$\overline{p_\alpha(U)}$, a contradiction.
\end{proof}

\begin{lem}
Let $S=\{X_\alpha, p^{\beta}_\alpha, \alpha<\beta<\tau\}$ be a
continuous inverse system with skeletal bonding maps and
$X=\displaystyle\underleftarrow{\lim}S$. Assume $U, V\subset X$ are
open with $q(U)$ and $q(V)$ finite and
$\overline{U}\cap\overline{V}=\varnothing$. If $q(U)\cap q(V)\cap
[\gamma,\tau)=\varnothing$ for some $\gamma<\tau$, then
$\mathrm{Int}\overline{p_\gamma (U)}$ and
$\mathrm{Int}\overline{p_\gamma (V)}$ are disjoint.
\end{lem}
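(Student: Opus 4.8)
The plan is to show that $\mathrm{Int}\overline{p_\gamma(U)}\cap\mathrm{Int}\overline{p_\gamma(V)}=\varnothing$; put $D_\alpha=\mathrm{Int}\overline{p_\alpha(U)}\cap\mathrm{Int}\overline{p_\alpha(V)}$ for $\alpha<\tau$. Since $q(U)$ and $q(V)$ are finite, $(q(U)\cup q(V))\cap[\gamma,\tau)$ is finite, and I would argue by induction on its cardinality $n$ (the statement for each $n$ being: whenever $q(U)\cap q(V)\cap[\gamma,\tau)=\varnothing$ and $|(q(U)\cup q(V))\cap[\gamma,\tau)|=n$, then $D_\gamma=\varnothing$). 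Note that $S$, being continuous with surjective bonding maps, satisfies $X=\mathrm{a}-\underleftarrow{\lim}S$, so Lemmas~2.3 and 3.1 apply to $S$ and to each of its tail subsystems.

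First I would isolate the following auxiliary fact: \emph{if $\gamma\le\alpha<\tau$ and $q(U)\cap[\gamma,\alpha)=\varnothing$, then $(p^\alpha_\gamma)^{-1}\bigl(\mathrm{Int}\overline{p_\gamma(U)}\bigr)\subset\mathrm{Int}\overline{p_\alpha(U)}$} (and likewise with $V$). This is a transfinite induction on $\alpha$. The successor case is immediate from Lemma~3.1(i): when $\beta\notin q(U)$ the open set $(p^{\beta+1}_\beta)^{-1}\bigl(\mathrm{Int}\overline{p_\beta(U)}\bigr)$ lies in $\overline{p_{\beta+1}(U)}$, hence in $\mathrm{Int}\overline{p_{\beta+1}(U)}$. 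The limit case is the delicate point: for a limit ordinal $\delta$ I would use that $S$ is continuous, so $X_\delta=\underleftarrow{\lim}\{X_\alpha:\alpha<\delta\}$, together with the standard formula $\overline{p_\delta(U)}=\bigcap_{\alpha<\delta}(p^\delta_\alpha)^{-1}\bigl(\overline{p_\alpha(U)}\bigr)$ for the closure in an inverse limit; combined with the inductive hypothesis at all $\alpha<\delta$ this forces the open set $(p^\delta_\gamma)^{-1}\bigl(\mathrm{Int}\overline{p_\gamma(U)}\bigr)$ to be contained in $\overline{p_\delta(U)}$, hence in its interior.

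Granting the auxiliary fact, the base case $n=0$ runs as follows: here $q(U)\cap[\gamma,\tau)=q(V)\cap[\gamma,\tau)=\varnothing$, so the auxiliary fact provides exactly the hypothesis of Lemma~3.2 for the tail system $\{X_\alpha,p^\beta_\alpha:\gamma\le\alpha<\beta<\tau\}$, giving $p_\gamma^{-1}\bigl(\mathrm{Int}\overline{p_\gamma(U)}\bigr)\subset\overline U$ and, similarly, $p_\gamma^{-1}\bigl(\mathrm{Int}\overline{p_\gamma(V)}\bigr)\subset\overline V$; therefore $p_\gamma^{-1}(D_\gamma)\subset\overline U\cap\overline V=\varnothing$, and $D_\gamma=\varnothing$ since $p_\gamma$ is onto. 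For the inductive step, let $\beta=\min\bigl((q(U)\cup q(V))\cap[\gamma,\tau)\bigr)$, so $q(U)\cap[\gamma,\beta)=q(V)\cap[\gamma,\beta)=\varnothing$; by symmetry of the statement in $U,V$ and since $\beta\notin q(U)\cap q(V)$, we may assume $\beta\notin q(U)$. With $\gamma'=\beta+1$, the set $(q(U)\cup q(V))\cap[\gamma',\tau)$ has $n-1$ elements and still misses $q(U)\cap q(V)$, so the inductive hypothesis applied at $\gamma'$ gives $D_{\gamma'}=\varnothing$. Suppose $D_\gamma\ne\varnothing$; by the auxiliary fact $N:=(p^\beta_\gamma)^{-1}(D_\gamma)$ is a non-empty open subset of $D_\beta$, and I set $G=(p^{\beta+1}_\beta)^{-1}(N)$. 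Since $\beta\notin q(U)$, Lemma~3.1(i) gives $G\subset\overline{p_{\beta+1}(U)}$, so $G\subset\mathrm{Int}\overline{p_{\beta+1}(U)}$; since $N\subset\overline{p_\beta(V)}=\overline{p^{\beta+1}_\beta\bigl(p_{\beta+1}(V)\bigr)}$, the open set $N$ meets $p^{\beta+1}_\beta\bigl(p_{\beta+1}(V)\bigr)$, so $G$ meets $\overline{p_{\beta+1}(V)}$, which is regularly closed because $p_{\beta+1}$ is skeletal (Lemmas~2.3 and 2.2(ii)); hence $G\cap\mathrm{Int}\overline{p_{\beta+1}(V)}\ne\varnothing$, and this last set lies in $\mathrm{Int}\overline{p_{\beta+1}(U)}\cap\mathrm{Int}\overline{p_{\beta+1}(V)}=D_{\gamma'}$, contradicting $D_{\gamma'}=\varnothing$. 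Thus $D_\gamma=\varnothing$, which completes the induction.

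The main obstacle is the limit-ordinal case of the auxiliary fact — making the passage to the inverse limit behave correctly with respect to interiors of closures, which is where continuity of $S$ enters essentially; the remainder is careful bookkeeping, notably that Lemmas~2.2, 2.3, 3.1 and 3.2 sometimes have to be invoked for a tail $\{X_\alpha:\nu\le\alpha<\tau\}$, which is again a continuous inverse system with skeletal bonding maps and has inverse limit canonically homeomorphic to $X$.
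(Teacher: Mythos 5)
Your proof is correct, and it rests on exactly the same ingredients as the paper's: Lemma 3.1(i) at an index lying outside one of the two ranks, regular closedness of $\overline{p_{\beta+1}(V)}$ coming from skeletality of the limit projections (Lemmas 2.2(ii) and 2.3), the ``auxiliary fact'' (which is the paper's Claim 5), and Lemma 3.2 to convert the level-$\gamma$ information into the contradiction with $\overline U\cap\overline V=\varnothing$. What differs is the bookkeeping. The paper fixes a putative point of $\mathrm{Int}\overline{p_\gamma(U)}\cap\mathrm{Int}\overline{p_\gamma(V)}$ and runs a single transfinite induction upward over all $\beta\in[\gamma,\tau)$, showing the intersection stays non-empty at every level, treating successor and limit ordinals separately; only at the very end, past $\max(q(U)\cup q(V))$, does it invoke Lemma 3.2. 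You instead do a finite induction on $n=|(q(U)\cup q(V))\cap[\gamma,\tau)|$, stripping off the least ``bad'' index $\beta$ at each step and jumping directly from level $\gamma$ to level $\beta+1$ via the auxiliary fact; the transfinite part of the argument is thereby quarantined inside the auxiliary fact, whose limit case you settle with the closure formula $\overline{p_\delta(U)}=\bigcap_{\alpha<\delta}(p^\delta_\alpha)^{-1}\bigl(\overline{p_\alpha(U)}\bigr)$ rather than with the paper's minimal-counterexample argument in Claim 5. Your organization is arguably cleaner and makes the role of the hypothesis $q(U)\cap q(V)\cap[\gamma,\tau)=\varnothing$ more transparent. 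One cosmetic point: you justify $p_\gamma^{-1}(D_\gamma)\ne\varnothing$ by surjectivity of $p_\gamma$, which the lemma's statement does not literally guarantee; the cleaner observation (also implicitly used by the paper) is that $D_\gamma$ is a non-empty open subset of $\overline{p_\gamma(U)}$ and hence meets $p_\gamma(U)\subset p_\gamma(X)$.
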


\begin{proof}
Suppose $\mathrm{Int} \overline{p_\gamma (U)}\cap
 \mathrm{Int}\overline{p_\gamma (V)}\neq\varnothing$. We are going to show by transfinite induction that $\mathrm{Int}
\overline{p_\beta(U)}\cap\mathrm{Int} \overline{p_\beta
(V)}\neq\varnothing$ for all $\beta\geq\gamma$. Assume this is done
for all $\beta\in(\gamma, \alpha)$
 with
 $\alpha<\tau$. If $\alpha$ is not a limit cardinal, then $\alpha-1$ belongs to at least one of the
 sets $q(U)$ and $q(V)$. Suppose $\alpha-1\not\in q(V)$.
 Hence,
 $(p^\alpha_{\alpha-1})^{-1}\big(\mathrm{Int}\overline{p_{\alpha-1}(V)}\big)\subset\mathrm{Int}\overline{p_{\alpha}(V)}$
 (see Lemma 3.1(i)). Because of our assumption, $\mathrm{Int}
\overline{p_{\alpha-1}(U)}\cap\mathrm{Int} \overline{p_{\alpha-1}
(V)}\neq\varnothing$. Moreover,
$p^\alpha_{\alpha-1}\big(\overline{p_\alpha(U)}\big)$ is dense in
$\overline{p_{\alpha-1}(U)}$. Hence, $\mathrm{Int}
\overline{p_{\alpha-1}(V)}$ meets
$p^\alpha_{\alpha-1}\big(\overline{p_\alpha(U)}\big)$. This yields
$\mathrm{Int}\overline{p_{\alpha}(V)}\cap\overline{p_\alpha(U)}\neq\varnothing$.
Finally, since by Lemma 2.2(ii) $\overline{p_\alpha(U)}$ is the
closure of its interior,
$\mathrm{Int}\overline{p_{\alpha}(V)}\cap\mathrm{Int}\overline{p_\alpha(U)}\neq\varnothing$.

Suppose $\alpha>\gamma$ is a limit cardinal. Since $q(U)\cap q(V)$
is a finite set, there exists $\lambda\in(\gamma,\alpha)$ such that
$\beta\not\in q(U)\cap q(V)$ for every $\beta\in[\lambda,\alpha)$.
Then for all $\beta\in[\lambda,\alpha)$ we have
$(p_\beta^{\beta+1})^{-1}\big(\mathrm{Int}\overline{p_\beta(U)}\big)\subset\mathrm{Int}\overline{p_{\beta+1}(U)}$
and
$(p_\beta^{\beta+1})^{-1}\big(\mathrm{Int}\overline{p_\beta(V)}\big)\subset\mathrm{Int}\overline{p_{\beta+1}(V)}$.
This allows us to find points
$x_\beta\in\mathrm{Int}\overline{p_\beta(U)}\cap\mathrm{Int}\overline{p_\beta(V)}$,
$\beta\in[\lambda,\alpha)$, such that
$p^\beta_\theta(x_\beta)=x_\theta$ for all
$\lambda\leq\theta\leq\beta<\alpha$. Because $X_\alpha$ is the limit
space of the inverse system
$S^\alpha_\lambda=\{X_\theta,p^\beta_\theta,\lambda\leq\theta\leq\beta<\alpha\}$,
we obtain a point $x_\alpha\in X_\alpha$ with
$p^\alpha_\theta(x_\alpha)=x_\theta$, $\theta\in[\gamma,\alpha)$.
Next claim implies
$x_\alpha\in\mathrm{Int}\overline{p_\alpha(U)}\cap\mathrm{Int}\overline{p_\alpha(V)}$
which completes the induction.

\smallskip
{\em Claim $5$. For all $\theta\in[\lambda,\alpha)$ we have
$(p^\alpha_\theta)^{-1}\big(\mathrm{Int}\overline{p_\theta(V)}\big)\subset\mathrm{Int}\overline{p_{\alpha}(V)}$
and
$(p^\alpha_\theta)^{-1}\big(\mathrm{Int}\overline{p_\theta(U)}\big)\subset\mathrm{Int}\overline{p_{\alpha}(U)}$}.
\smallskip

Fix $\theta\in[\lambda,\alpha)$ and let $\Lambda$ be the set of all
$\beta\in[\theta,\alpha)$ such that
$(p^\beta_\theta)^{-1}\big(\mathrm{Int}\overline{p_\theta(U)}\big)\backslash\overline{p_{\beta}(U)}\neq\varnothing$.
Suppose that $\Lambda\neq\varnothing$ and denote by $\nu$ the
minimal element of $\Lambda$. Therefore
$W_\nu=(p^\nu_\theta)^{-1}\big(\mathrm{Int}\overline{p_\theta(U)}\big)\backslash\overline{p_{\nu}(U)}\neq\varnothing$.
Observe that $\nu>\theta$ because $\theta\not\in q(U)$. Moreover,
$\nu$ is a limit cardinal. Indeed, otherwise
$(p^{\nu-1}_\theta)^{-1}\big(\mathrm{Int}\overline{p_\theta(U)}\big)\subset\mathrm{Int}\overline{p_{\nu-1}(U)}$.
On the other hand $\nu-1\not\in q(U)$ yields
$(p^{\nu}_{\nu-1})^{-1}\big(\mathrm{Int}\overline{p_{\nu-1}(U)}\big)\subset\mathrm{Int}\overline{p_\nu(U)}$.
Hence,
$(p^{\nu}_\theta)^{-1}\big(\mathrm{Int}\overline{p_\theta(U)}\big)\subset\mathrm{Int}\overline{p_{\nu}(U)}$,
a contradiction. So, $X_\nu$ is the limit of the inverse system
$S^{\nu}_\theta=\{X_\beta,p^{\mu}_\beta,\theta\leq\beta\leq\mu<\nu\}$.
Now, we apply Lemma 3.2 to the system $S_{\nu}$ and the set
$\mathrm{Int}\overline{p_\nu(U)}$, to conclude that
$(p^\nu_\theta)^{-1}\big(\mathrm{Int}\overline{p_\theta(U)}\big)\subset\overline{p_{\nu}(U)}$
which contradicts $W_\nu\neq\varnothing$. Consequently,
$\Lambda=\varnothing$ and
$(p^\beta_\theta)^{-1}\big(\mathrm{Int}\overline{p_\theta(U)}\big)\subset\overline{p_{\beta}(U)}$
for all $\beta\in[\theta,\alpha)$. We can apply again Lemma 3.2 to
the system
$S^\alpha_\theta=\{X_\mu,p^\beta_\mu,\theta\leq\mu\leq\beta<\alpha\}$
and the set $\mathrm{Int}\overline{p_\alpha(U)}$ to obtain that
$(p^\alpha_\theta)^{-1}\big(\mathrm{Int}\overline{p_\theta(U)}\big)\subset\mathrm{Int}\overline{p_{\alpha}(U)}$.
Similarly, we can show that
$(p^\alpha_\theta)^{-1}\big(\mathrm{Int}\overline{p_\theta(V)}\big)\subset\mathrm{Int}\overline{p_{\alpha}(V)}$
which completes the proof of Claim 5.

Therefore,
$\mathrm{Int}\overline{p_\beta(U)}\cap\mathrm{Int}\overline{p_\beta(V)}\neq\varnothing$
for all $\beta\in [\gamma,\tau)$. To finish the proof of this lemma,
take $\lambda(0)\in (\gamma,\tau$ such that $\big(q(U)\cup
q(V)\big)\cap [\lambda(0),\tau)=\varnothing$. Repeating the
arguments from Claim 5, we can show that
$(p^\alpha_{\lambda(0)})^{-1}\big(\mathrm{Int}\overline{p_{\lambda(0)}(U)}\big)\subset\mathrm{Int}\overline{p_{\alpha}(U)}$
and
$(p^\alpha_{\lambda(0)})^{-1}\big(\mathrm{Int}\overline{p_{\lambda(0)}(V)}\big)\subset\mathrm{Int}\overline{p_{\alpha}(V)}$
for all $\alpha\in [\lambda(0),\tau)$. Then apply Lemma 3.2 to the
inverse system
$S_{\lambda(0)}=\{X_\mu,p^\beta_\mu,\lambda(0)\leq\mu\leq\beta<\tau\}$
and the set $U$ to obtain that
$p_{\lambda(0)}^{-1}\big(\mathrm{Int}\overline{p_{\lambda(0)}(U)}\big)\subset\mathrm{Int}\overline{U}$.
Similarly, we have
$p_{\lambda(0)}^{-1}\big(\mathrm{Int}\overline{p_{\lambda(0)}(V)}\big)\subset\mathrm{Int}\overline{V}$.
Since
$\mathrm{Int}\overline{p_{\lambda(0)}(U)}\cap\mathrm{Int}\overline{p_{\lambda(0)}(V)}\neq\varnothing$,
the last two inclusions imply
$\overline{U}\cap\overline{V}\neq\varnothing$, a contradiction.
Hence, $\mathrm{Int} \overline{p_\gamma (U)}\cap
 \mathrm{Int}\overline{p_\gamma (V)}=\varnothing$.
\end{proof}

Next proposition was announce in \cite{v}:

\begin{pro}\cite[Proposition 3.2]{v} Let $S=\{X_\alpha,
p^{\beta}_\alpha, \alpha<\beta<\tau\}$ be an almost continuous
inverse system with skeletal bonding maps such that
$X=\displaystyle\mathrm{a}-\underleftarrow{\lim}S$. Then the family
of all open subsets of $X$ having a finite rank is a $\pi$-base for
$X$.
\end{pro}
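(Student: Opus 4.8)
The plan is to establish, by transfinite induction on $\mu<\tau$, a statement slightly stronger than the conclusion, namely $P(\mu)$: for every nonempty open $V\subset X_\mu$ there is a nonempty open $V_1\subset V$ such that the open set $p_\mu^{-1}(V_1)\subset X$ has finite rank. Granting $P(\mu)$ for all $\mu<\tau$, the proposition follows at once: any nonempty open $W\subset X$ contains a basic open set $p_\mu^{-1}(V)$ (since $X$ carries the subspace topology of $\displaystyle\underleftarrow{\lim}S$), and $P(\mu)$ produces a finite-rank open set $p_\mu^{-1}(V_1)\subset p_\mu^{-1}(V)\subset W$; thus the open sets of finite rank form a $\pi$-base.

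For the base of the induction, when $\mu$ is smaller than the first infinite index, Lemma 3.1(ii) gives $q\big(p_\mu^{-1}(V)\big)\subset[0,\mu)$, which is finite, so $V_1=V$ works. When $\mu$ is a limit ordinal, almost continuity of $S$ means $X_\mu$ is (densely) embedded in $\displaystyle\underleftarrow{\lim}\{X_\alpha,p^\beta_\alpha,\alpha<\beta<\mu\}$, so $V$ contains a basic set $(p^\mu_\rho)^{-1}(V'')$ with $\rho<\mu$ and $V''\subset X_\rho$ nonempty open; applying $P(\rho)$ to $V''$ and using $p_\mu^{-1}\big((p^\mu_\rho)^{-1}(V''_1)\big)=p_\rho^{-1}(V''_1)$, the set $V_1:=(p^\mu_\rho)^{-1}(V''_1)\subset V$ works.

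The successor case $\mu=\nu+1$ is the heart of the matter. Given a nonempty open $V\subset X_{\nu+1}$, skeletality of $p^{\nu+1}_\nu$ makes $G:=\mathrm{Int}\,\overline{p^{\nu+1}_\nu(V)}$ a nonempty open subset of $X_\nu$, and since $\overline{p^{\nu+1}_\nu(V)}$ is regularly closed (Lemma 2.2(ii)) the set $p^{\nu+1}_\nu(V)$ is dense in $\overline G$. Apply $P(\nu)$ to $G$ to obtain a nonempty open $G_1\subset G$ with $q\big(p_\nu^{-1}(G_1)\big)$ finite, and put $V_1:=V\cap(p^{\nu+1}_\nu)^{-1}(G_1)$. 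Then $V_1$ is nonempty (because the dense set $p^{\nu+1}_\nu(V)$ meets the nonempty open subset $G_1\subset\overline G$), $V_1\subset V$, and $p^{\nu+1}_\nu(V_1)=p^{\nu+1}_\nu(V)\cap G_1$ is dense in $\overline{G_1}$. Using this density and the fact that continuous images of dense sets stay dense, one checks that $\overline{p_\alpha\big(p_{\nu+1}^{-1}(V_1)\big)}=\overline{p^\nu_\alpha(G_1)}=\overline{p_\alpha\big(p_\nu^{-1}(G_1)\big)}$ for every $\alpha\le\nu$; consequently, for each $\alpha<\nu$ the condition in the definition of the rank at $\alpha$ (which refers only to levels $\alpha$ and $\alpha+1$) is literally the same for $p_{\nu+1}^{-1}(V_1)$ and for $p_\nu^{-1}(G_1)$. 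Hence $q\big(p_{\nu+1}^{-1}(V_1)\big)\cap[0,\nu)=q\big(p_\nu^{-1}(G_1)\big)$, and since Lemma 3.1(ii) also confines $q\big(p_{\nu+1}^{-1}(V_1)\big)$ to $[0,\nu+1)$ and $q\big(p_\nu^{-1}(G_1)\big)$ to $[0,\nu)$, we get $q\big(p_{\nu+1}^{-1}(V_1)\big)\subset q\big(p_\nu^{-1}(G_1)\big)\cup\{\nu\}$, which is finite. This closes the induction.

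I expect the main difficulty to be choosing the inductive hypothesis correctly. The naive statement ``every nonempty open set contains a finite-rank open set'' does not survive the successor step: a finite-rank open subset of $p_\nu^{-1}(G)$ need not lie inside the shrunk cylinder $p_{\nu+1}^{-1}(V_1)$, so one cannot simply intersect. Insisting in $P(\mu)$ that the witness be a cylinder over level $\mu$ is exactly what allows the level-$\nu$ set $G_1$ to be pulled back to $V_1=V\cap(p^{\nu+1}_\nu)^{-1}(G_1)$. The remaining work — the bookkeeping with closures and checking that applying the maps $p^\nu_\alpha$ preserves the relevant densities — is routine.
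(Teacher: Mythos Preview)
Your proof is correct, but it is organized differently from the paper's. The paper runs a transfinite induction on $\alpha<\tau$ with the statement ``the open $U\subset X$ with $q(U)\cap[1,\alpha]$ finite form a $\pi$-base for $X$''; under this formulation the successor step is trivial and the limit step carries all the work (this is where the paper invokes almost continuity and proves its Claim~6), and an additional closing argument is then needed to pass from ``$q(U)\cap[1,\beta]$ finite for a suitable $\beta$'' to ``$q(U)$ finite''. Your induction lives instead on the factor spaces $X_\mu$ and insists that the witness be a cylinder $p_\mu^{-1}(V_1)$; this flips the difficulty to the successor step (where your density/closure computation plays the same role as the paper's Claim~6) while the limit step becomes an immediate consequence of almost continuity, and no extra closing argument is required. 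The key computation---that the closures $\overline{p_\alpha(\cdot)}$ at levels $\alpha\le\nu$ coincide for $p_{\nu+1}^{-1}(V_1)$ and $p_\nu^{-1}(G_1)$---appears in both proofs, but your choice of inductive hypothesis packages it more economically; the paper's route follows Shchepin's original argument more closely.
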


\begin{proof}
First, following the proof of \cite[Section 3, Lemma 2]{sc1}, we are
going to show by transfinite induction that for every $\alpha<\tau$
the open subsets $U\subset X$ with $q(U)\cap[1,\alpha]$ being finite
form a $\pi$-base for $X$. Obviously, this is true for finite
$\alpha$, and it holds for $\alpha+1$ provided it is true for
$\alpha$. So, it remains to prove this statement for a limit
cardinal $\alpha$ if it is true for any $\beta<\alpha$. Suppose
$G\subset X$ is open. Let $S_\alpha=\{X_\gamma, p^{\beta}_\gamma,
\gamma<\beta<\alpha\}$, $Y_\alpha=\displaystyle\underleftarrow{\lim}
S_\alpha$ and $\tilde{p}^{\alpha}_\gamma\colon Y_\alpha\to X_\gamma$
are the limit projections of $S_\alpha$. Obviously, $X_\alpha$ is
naturally embedded as a dense subset of $Y_\alpha$ and each
$\tilde{p}^{\alpha}_\gamma$ restricted on $X_\alpha$ is
$p^{\alpha}_\gamma$. Then, by Lemma 2.3,
$\mathrm{Int}\overline{p_\alpha(G)}$ is non-empty and open in
$X_\alpha$ (here both interior and closure are taken in $X_\alpha$).
So, there exists $\gamma<\alpha$ and an open set $U_\gamma\subset
X_\gamma$ with $\displaystyle
(\tilde{p}^{\alpha}_\gamma)^{-1}(U_\gamma)\subset\mathrm{Int}_{Y_\alpha}\overline{p_\alpha(G)}^{Y_\alpha}$.
Consequently, $\displaystyle
(p^{\alpha}_\gamma)^{-1}(U_\gamma)\subset\mathrm{Int}\overline{p_\alpha(G)}$.
We can suppose that $U_\gamma=\mathrm{Int}\overline{U_\gamma}$.
Then, according to the inductive assumption,
$p_\gamma^{-1}(U_\gamma)\cap G$ contains an open set $W\subset X$
such that $q(W)\cap [1,\gamma]$ is finite. So,
$W_\gamma=\mathrm{Int}\overline{p_\gamma(W)}\neq\varnothing$ and it
is contained in $U_\gamma$. Hence, $p_\gamma^{-1}(W_\gamma)\cap G$
is a non-empty open subset of $X$ contained in $G$.

\smallskip
{\em Claim $6$. $q\big(p_\gamma^{-1}(W_\gamma)\cap G\big)\cap
[1,\alpha]=q(W)\cap [1,\gamma)$.}
\smallskip

Indeed, for every $\beta\leq\gamma$ we have
$\overline{p_\beta\big(p_\gamma^{-1}(W_\gamma)\cap
G\big)}=\overline{p_\beta(W)}$. This implies
$$q(W)\cap
[1,\gamma)=q\big(p_\gamma^{-1}(W_\gamma)\cap G\big)\cap
[1,\gamma).\leqno{(14)}$$ Moreover, if $\beta\in [\gamma,\alpha)$,
then $$\overline{p_\beta\big(p_\gamma^{-1}(W_\gamma)\cap
G\big)}=\overline{p_\beta\big(p_\gamma^{-1}(W_\gamma)\big)}$$
because $W_\gamma\subset U_\gamma$ and $\displaystyle
(p^{\alpha}_\gamma)^{-1}(U_\gamma)\subset\overline{p_\alpha(G)}$.
Hence, $$q\big(p_\gamma^{-1}(W_\gamma)\cap G\big)\cap
[\gamma,\alpha)=q\big(p_\gamma^{-1}(W_\gamma)\big)\cap
[\gamma,\alpha).\leqno{(15)}$$ Obviously, by Lemma 3.1(ii),
$q\big(p_\gamma^{-1}(W_\gamma)\big)\cap
[\gamma,\alpha)=\varnothing$. Then the combination of $(14)$ and
$(15)$ provides the proof of the claim.

Therefore, for every $\alpha<\tau$ the open sets $W\subset X$ with
$q(W)\cap [1,\alpha]$ finite form a $\pi$-base for $X$. Now, we can
finish the proof of the proposition. If $V\subset X$ is open we find
a set $G\subset V$ with $G=p_\beta^{-1}(G_\beta)$, where $G_\beta$
is open in $X_\beta$. Then there exists an open set $W\subset G$
such that $q(W)\cap [1,\beta]$ is finite. Let
$W_\beta=\mathrm{Int}\overline{p_\beta(W)}$ and
$U=p_\beta^{-1}(W_\beta\cap G_\beta)$. It is easily seen that
$\overline{p_\nu(U)}=\overline{p_\nu(W)}$ for all $\nu\leq\beta$.
This yields that $q(U)\cap [1,\beta)=q(W)\cap [1,\beta)$. On the
other hand, by Lemma 3.1(ii), $q(U)\cap [\beta,\tau)=\varnothing$.
Hence $q(U)$ is finite.
\end{proof}

\begin{pro}
Let $X$ be a compact $\mathrm{I}$-favorable space with respect to
co-zero sets. Then every embedding of $X$ in another space is
$\pi$-regular.
\end{pro}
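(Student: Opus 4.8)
The strategy is to reduce to the case of an uncountable-weight compactum (the countable-weight case is easy, since then $X$ is metrizable with a countable base of co-zero sets, hence has a countable $\pi$-base, and a $\pi$-regular embedding can be built by hand) and then run a transfinite induction on $w(X)$ using Theorem~2.6. So suppose $w(X)=\tau$ is uncountable and $X$ is $\mathrm I$-favorable with respect to co-zero sets. By Theorem~2.6, $X=\underleftarrow{\lim}S$ for a continuous inverse system $S=\{X_\alpha,p^\beta_\alpha,\tau\}$ with skeletal bonding maps, where each $X_\alpha$ is a compact $\mathrm I$-favorable space (with respect to co-zero sets) of weight $<\tau$. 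By the inductive hypothesis, each $X_\alpha$ embeds $\pi$-regularly in every space containing it; in particular, fixing an embedding $X\hookrightarrow Y$, the composite $X\to X_\alpha$ together with a $\pi$-regular structure on $X_\alpha$ will be transported back to $X$, but the point is to make these cohere along the system.

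The key structural input is Proposition~3.5: the open sets $U\subset X$ with finite rank $q(U)$ form a $\pi$-base $\mathcal B$ for $X$. The plan is to define $\reg\colon\mathcal B\to\mathcal T_Y$ by first pushing a $U\in\mathcal B$ forward to the smallest level $\alpha=\alpha(U)$ at which it is "captured" — concretely, take $\alpha$ above $\max q(U)$ and large enough that $U$ is determined by a co-zero set $V_U=\operatorname{Int}\overline{p_\alpha(U)}\subset X_\alpha$ (so $p_\alpha^{-1}(V_U)\subset\overline U$ by Lemma~3.2-type arguments, exactly as at the end of the proof of Proposition~3.5); then apply the $\pi$-regular structure $\reg_\alpha$ on $X_\alpha\subset Y_\alpha$ (for a suitable space $Y_\alpha$ built from $Y$, e.g. $Y_\alpha=\overline{p_\alpha(Y\cap\text{something})}$ or simply the closure of the image of $Y$ under a map extending $p_\alpha$), and finally pull $\reg_\alpha(V_U)$ back to an open subset of $Y$. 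Condition~(1) — that $\reg(U)\cap X$ is dense in $U$ — follows because $\reg_\alpha(V_U)\cap X_\alpha$ is dense in $V_U$ and $p_\alpha^{-1}(V_U)$ has the same closure as $U$.

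The main obstacle is condition~(2): if $U\cap V=\varnothing$ for $U,V\in\mathcal B$, I must ensure $\reg(U)\cap\reg(V)=\varnothing$, and this is delicate precisely because $U$ and $V$ may be assigned to different levels $\alpha(U)\neq\alpha(V)$. This is exactly where Lemma~3.3 does the work: one passes to a common level $\gamma$ above $\alpha(U),\alpha(V)$ and above all of $q(U)\cup q(V)$, uses Lemma~3.3 to conclude that $\operatorname{Int}\overline{p_\gamma(U)}$ and $\operatorname{Int}\overline{p_\gamma(V)}$ are disjoint open subsets of $X_\gamma$ (note $\overline U\cap\overline V=\varnothing$ follows from $U\cap V=\varnothing$ after shrinking $U,V$ within $\mathcal B$, or one argues with $\overline{p_\gamma(U)}\cap\operatorname{Int}\overline{p_\gamma(V)}$ directly), and then invokes the $\pi$-regularity of the embedding $X_\gamma\subset Y_\gamma$: the disjoint sets get disjoint $\reg_\gamma$-images in $Y_\gamma$. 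To make this compatible one should define $\reg(U)$ via the top level $\gamma$ uniformly — i.e. fix in advance, for the two competing sets, a common level and use compatibility of the $\reg_\alpha$ along bonding maps (which can be arranged by choosing the inductive $\pi$-regular structures coherently, or by re-deriving $\reg$ at level $\gamma$ and checking it agrees up to a set dense in $X$ with the level-$\alpha(U)$ version, which suffices for (1)). Handling this bookkeeping — ensuring the value $\reg(U)$ does not depend on which partner $V$ forces us up to level $\gamma$ — is the real content; everything else is the routine transfer of density and disjointness through the skeletal projections, using Lemma~2.2 that $\overline{p_\alpha(U)}$ is regularly closed.
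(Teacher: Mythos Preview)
Your architecture is correct --- transfinite induction on $w(X)$, Theorem~2.6 for the system, Proposition~3.4 for the finite-rank $\pi$-base, and Lemma~3.3 for disjointness --- but you have not solved the crux, and you say so yourself: the ``bookkeeping'' of making $\reg(U)$ independent of its partner $V$ is precisely the missing idea, and your single-level assignment $U\mapsto\alpha(U)$ cannot be repaired by ``choosing structures coherently'', because different disjoint partners $V,V'$ would force incompatible common levels $\gamma,\gamma'$ while $\reg(U)$ can take only one value. A smaller confusion: you try to manufacture target spaces $Y_\alpha$ out of the given ambient $Y$; in fact one first observes that it suffices to $\pi$-regularly embed $X$ in \emph{some} Tychonoff cube, so the given $Y$ plays no role in the induction and the targets are simply the cubes $\mathbb I^{A(\alpha)}$ supplied by the inductive hypothesis on each $X_\alpha$.

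The paper's device is to replace the single level by a finite set of levels intrinsic to $U$: fix once and for all $\alpha_0<\tau$, set $\Omega(U)=\{\alpha_0\}\cup\{\alpha,\alpha+1:\alpha\in q(U)\}$, and encode $U$ by the finite-support product
\[
\Gamma(U)=\prod_{\alpha\in\Omega(U)}\mathrm{Int}\,\overline{p_\alpha(U)}\ \times\ \prod_{\alpha\notin\Omega(U)}X_\alpha\ \subset\ \prod_{\alpha<\tau}X_\alpha.
\]
If $\overline{U}\cap\overline{V}=\varnothing$, let $\beta=\max\bigl(\Omega(U)\cap\Omega(V)\bigr)$; the intersection is nonempty because $\alpha_0$ lies in both, and by construction $q(U)\cap q(V)\cap[\beta,\tau)=\varnothing$, so Lemma~3.3 gives disjointness of $\mathrm{Int}\,\overline{p_\beta(U)}$ and $\mathrm{Int}\,\overline{p_\beta(V)}$ at a coordinate $\beta$ that is active in \emph{both} products. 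Hence $\Gamma(U)\cap\Gamma(V)=\varnothing$, with no reference to $V$ in the definition of $\Gamma(U)$. Composing coordinatewise with the inductive operators $\reg_\alpha\colon\mathcal T_{X_\alpha}\to\mathcal T_{\mathbb I^{A(\alpha)}}$ to get $\theta_1(U)\subset\prod_\alpha\mathbb I^{A(\alpha)}$, and then setting $\theta(G)=\bigcup\{\theta_1(U):U\in\mathcal B,\ \overline U\subset G\}$, completes the construction; Claims~8 and~9 verify that $\Gamma(U)\cap X$ is nonempty and contained in $\overline U$, which gives condition~(1).
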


\begin{proof}
We are going to prove this proposition by transfinite induction with
respect to the weight $w(X)$. This is true if $X$ is metrizable, see
for example \cite[\S21, XI, Theorem 2]{k}. Assume the proposition is
true for any compact space $Y$ of weight $<\tau$  such that $Y$ is
$\mathrm{I}$-favorable with respect to co-zero sets, where $\tau$ is
an uncountable cardinal. Suppose $X$ is compact
$\mathrm{I}$-favorable with respect to co-zero sets and $w(X)=\tau$.
Then, by Theorem 2.6, $X$ is the limit space of a continuous inverse
system $\displaystyle S=\{X_\alpha, p^{\beta}_\alpha,
\alpha<\beta<\tau\}$ such that all $X_\alpha$ are compact
$\mathrm{I}$-favorable with respect to co-zero sets spaces of weight
$<\tau$ and all bonding maps are surjective and skeletal. If
suffices to show that there exists a $\pi$-regular embedding of $X$
in a Tychonoff cube $\I^A$ for some $\mathrm{card}(A)$.

By Proposition 3.4, $X$ has a $\pi$-base $\mathcal B$ consisting of
open sets $U\subset X$  with finite rank. For every $U\in\mathcal B$
let $\Omega(U)=\{\alpha_0, \alpha, \alpha+1:\alpha\in q(U)\}$, where
$\alpha_0<\tau$ is fixed. Obviously, $X$ is a subset of
$\prod\{X_\alpha:\alpha< \tau\}$. For every $U\in\mathcal B$ we
consider the open set $\Gamma(U)\subset\prod\{X_\alpha:\alpha<
\tau\}$ defined by
$$\Gamma(U)=\prod\{\mathrm{Int}\overline{p_\alpha(U)}:\alpha\in
\Omega(U)\}\times\prod\{X_\alpha:\alpha\not\in\Omega(U)\}.$$

\smallskip
{\em Claim $7$. $\Gamma (U_1)\cap\Gamma (U_2)=\varnothing$ whenever
$\overline{U_1}\cap\overline{U_2}=\varnothing$. Moreover, there
exists $\beta\in\Omega(U_1)\cap\Omega(U_2)$ with
$\overline{p_\beta(U_1)}\cap
 \overline{p_\beta(U_2)}=\varnothing$.}

\smallskip
Let $\beta=\max\{\Omega(U_1)\cap \Omega(U_2)\}$. Then
 $\beta$ is either $\alpha_0$ or $\max\{q(U_1)\cap q(U_2)\}+1$. In both
 cases $q(U_1)\cap q(U_2)\cap [\beta,\tau)=\varnothing$.
 According to Lemma 3.3, $\mathrm{Int}\overline{p_\beta(U_1)}\cap
 \mathrm{Int}\overline{p_\beta(U_2)}=\varnothing$.
 Since $\beta\in \Omega(U_1)\cap \Omega(U_2)$,
 $\Gamma(U_1)\cap\Gamma(U_2)=\varnothing$.

Suppose $U\subset X$ is open. Since all $p_\alpha$ and
$p^\beta_\alpha$ are closed skeletal maps (see Lemma 2.2 and Lemma
2.3), $U_\alpha=\mathrm{Int}p_\alpha(U)$ is a non-empty subset of
$X_\alpha$ for every $\alpha$.

\smallskip
{\em Claim $8$. $\bigcap\{ p_\alpha^{-1}(U_\alpha)\cap
U:\alpha\in\Delta\}\neq\varnothing$ for every finite set
$\Delta\subset\{\alpha:\alpha<\tau\}$. }

Obviously, this is true if $|\Delta|=1$. Suppose it is true for all
$\Delta$ with $|\Delta|\leq n$ for some $n$, and let
$\{\alpha_1,..,\alpha_n, \alpha_{n+1}\}$ be a finite set of $n+1$
cardinals $<\tau$. Then $\displaystyle V=\bigcap_{i\leq n}
p_{\alpha_i}^{-1}(U_{\alpha_i})\cap U\neq\varnothing$. Since
$p_{\alpha_{n+1}}$ is skeletal, $\displaystyle
W=\mathrm{Int}p_{\alpha_{n+1}}(V)$ is a non-empty subset of
$\displaystyle X_{\alpha_{n+1}}$, so $W\subset U_{\alpha_{n+1}}$.
Consequently $\displaystyle \bigcap_{i\leq n+1}
p_{\alpha_i}^{-1}(U_{\alpha_i})\cap U\neq\varnothing$.

\smallskip
{\em Claim $9$. $\Gamma (U)\cap X$ is a non-empty subset of
$\overline{U}$ for all $U\in\mathcal B$.}

\smallskip
We are going to show first that $\Gamma (U)\cap X\neq\varnothing$
for all $U\in\mathcal B$. Indeed, we fix such $U$  and let
$\Omega(U)=\{\alpha_i:i\leq k\}$ with $\alpha_i\leq\alpha_j$ for
$i\leq j$. By Claim 8, there exists $\displaystyle x\in
\bigcap_{i\leq k} p_{\alpha_i}^{-1}(U_{\alpha_i})\cap U$. So,
$p_{\alpha_i}(x)\in U_{\alpha_i}$ for all $i\leq k$. This implies
$x\in\Gamma (U)\cap X$.

To show that $\Gamma(U)\cap X\subset\overline{U}$, let
$x\in\Gamma(U)\cap X$. Define $\beta(U)=\max q(U)+1$. Then
$p_{\beta(U)}(x)\in\mathrm{Int}\overline{p_{\beta(U)}(U)}$.
 Since $\alpha\not\in q(U)$ for all $\alpha\geq\beta(U)$, the arguments from Claim 5
 show that
$\big(p^\alpha_{\beta(U)}\big)^{-1}\big(\mathrm{Int}\overline
{p_{\beta(U)}(U)}\big)\subset \mathrm{Int}\overline{p_\alpha(U)}$
for $\alpha\geq\beta(U)$. Hence, applying Lemma 3.2 to the inverse
system $S_U=\{X_\alpha,
p^\beta_\alpha,\beta(U)\leq\alpha\leq\beta<\tau\}$ and the set $U$,
we obtain $x\in p_{\beta(U)}^{-1}\big(\mathrm{Int}\overline
{p_{\beta(U)}(U)}\big)\subset\overline{U}$. This completes the proof
of Claim 9.

According to our assumption, each $X_\alpha$ is $\pi$-regularly
embedded in $\I^{A(\alpha)}$ for some $A(\alpha)$. So, there exists
a $\pi$-regular operator $\reg_\alpha:\mathcal
T_{X_\alpha}\to\mathcal T_{\I^{A(\alpha)}}$. For every $U\in\mathcal
B$ consider the open set
$\theta_1(U)\subset\prod\{\I^{A(\alpha)}:\alpha<\tau\}$,
$$\theta_1(U)=\prod\{\reg_\alpha\big(\mathrm{Int}\overline{p_\alpha(U)}\big):\alpha\in
\Omega(U)\}\times\prod\{\I^{A(\alpha)}:\alpha\not\in \Omega(U)\}.$$
Now, we define a function $\theta$ from $\mathcal B$ to the topology
of $\prod\{\I^{A(\alpha)}:\alpha <\tau\}$ by
$$\theta(G)=\bigcup\{\theta_1(U):U\in\mathcal
B\hbox{~}\mbox{and}\hbox{~}\overline{U}\subset G\}.$$ Let us show
that $\theta$ is $\pi$-regular. It follows from Claim 7 that
$\theta(G_1)\cap\theta(G_2)=\varnothing$ provided $G_1\cap
G_2=\varnothing$. It is easily seen that $\theta(G)\cap
X=\bigcup\{\Gamma(U)\cap X:U\in\mathcal
B\hbox{~}\mbox{and}\hbox{~}\overline{U}\subset G\}.$ According to
Claim 9, each $\Gamma(U)\cap X$ is a non-empty subset of
$\overline{U}$. Hence, $\theta(G)\cap X$ is a non-empty dense subset
of $G$. So, $X$ is $\pi$-regularly embedded in $\I^A$, where $A$ is
the union of all $A(\alpha)$, $\alpha<\tau$.
\end{proof}

\begin{lem}
Suppose $X=\displaystyle\underleftarrow{\lim}S$, where
$S=\{X_\alpha, p^{\beta}_\alpha, A\}$ is an almost $\sigma$-complete
inverse system with open bonding maps and second countable spaces
$X_\alpha$. Then $X$ is ccc and for every open $U\subset X$ there
exists $\alpha\in A$ such that
$p_\beta^{-1}\big(p_\beta(\overline{U})\big) =\overline{U}$.
Moreover, any continuous function $f$ on $X$ can be represented in
the form $f=g\circ p_\alpha$ for some $\alpha\in A$ and a continuous
function $g$ on $X_\alpha$.
\end{lem}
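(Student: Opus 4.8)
The plan is to prove, in order: \textup{(1)} every limit projection $p_\alpha\colon X\to X_\alpha$ — and hence every $p^\beta_\alpha$ — is surjective and open; \textup{(2)} $X$ is ccc; \textup{(3)} for every open $U\subset X$ there is $\alpha\in A$ with $p_\alpha^{-1}\big(p_\alpha(\overline U)\big)=\overline U$; and then to read off the factorization of functions from \textup{(3)}. For \textup{(1)}, surjectivity of the $p_\alpha$ is the usual thread‑extension argument, where $\sigma$‑completeness of $A$ together with the almost‑continuity at countable chains is exactly what lets one pass through limit indices; openness then follows because $p_\alpha\big(p_\gamma^{-1}(W)\big)=p^\delta_\alpha\big((p^\delta_\gamma)^{-1}(W)\big)$ for any $\delta\in A$ above $\alpha$ and $\gamma$, and the sets $p_\gamma^{-1}(W)$ with $W$ open in $X_\gamma$ form a base of $X$ (as $A$ is directed). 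Openness of $p_\alpha$ yields the two identities I will use throughout: for $W$ open and $F$ closed in $X_\alpha$, $\overline{p_\alpha^{-1}(W)}=p_\alpha^{-1}(\overline W)$ and $\mathrm{Int}\,p_\alpha^{-1}(F)=p_\alpha^{-1}(\mathrm{Int}\,F)$, hence also $\mathrm{Int}\,\overline{p_\alpha^{-1}(W)}=p_\alpha^{-1}\big(\mathrm{Int}\,\overline W\big)$.

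Granting \textup{(2)}, steps \textup{(3)} and the factorization are routine. Given an open $U$, the set $G:=X\setminus\overline U$ is regular open; by ccc choose a countable maximal family $\{p_{\delta_k}^{-1}(W_k):k\in\omega\}$ of pairwise disjoint nonempty basic cylinders contained in $G$, so that $H:=\bigcup_k p_{\delta_k}^{-1}(W_k)$ is dense in $G$ and $G=\mathrm{Int}\,\overline H$. Since $A$ is $\sigma$‑complete and directed, the countably many $\delta_k$ have an upper bound $\alpha\in A$; then $H=p_\alpha^{-1}(\Omega)$ with $\Omega=\bigcup_k(p^\alpha_{\delta_k})^{-1}(W_k)$ open in $X_\alpha$, so $G=\mathrm{Int}\,\overline{p_\alpha^{-1}(\Omega)}=p_\alpha^{-1}\big(\mathrm{Int}\,\overline\Omega\big)$ and $\overline U=p_\alpha^{-1}\big(X_\alpha\setminus\mathrm{Int}\,\overline\Omega\big)$, which gives $p_\alpha^{-1}(p_\alpha(\overline U))=\overline U$ by surjectivity of $p_\alpha$. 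Now let $f\colon X\to\mathbb R$ be continuous and $a<b$ rationals. With $U_n=\{f>a+\tfrac1n\}$ one has $\overline{U_n}\subset\{f\geq a+\tfrac1n\}\subset U_{n+1}$, whence $\{f>a\}=\bigcup_n\mathrm{Int}\,\overline{U_n}$; by \textup{(3)} each $\overline{U_n}$ is a full preimage $p_{\beta_n}^{-1}(F_n)$, so $\mathrm{Int}\,\overline{U_n}=p_{\beta_n}^{-1}(\mathrm{Int}\,F_n)$ is a cylinder, and symmetrically (apply this to $-f$) for $\{f<b\}$. Letting $\alpha$ be an upper bound in $A$ of all the indices so obtained (countably many, so $\sigma$‑completeness applies), each $\{f>a\}$ and $\{f<b\}$, hence each $f^{-1}((a,b))$, is $p_\alpha$‑saturated (a union of fibres of $p_\alpha$). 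Therefore $f$ is constant on the fibres of $p_\alpha$ and factors as $f=g\circ p_\alpha$; finally $g^{-1}((a,b))=p_\alpha\big(f^{-1}((a,b))\big)$ is open because $p_\alpha$ is open and surjective, so $g$ is continuous.

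The real obstacle is \textup{(2)}. Openness and $\sigma$‑completeness alone do not suffice — a closed subspace of a product of second countable spaces need not be ccc (for instance the one‑point compactification of an uncountable discrete space embeds as a closed subspace of a Cantor cube) — so the almost‑continuity must enter in an essential way, and the argument is a reflection of the type standard for $\sigma$‑spectra (cf.\ \cite{sc1}, \cite{kp2}). Suppose $\{U_\xi:\xi<\omega_1\}$ is a pairwise disjoint family of nonempty basic cylinders $U_\xi=p_{\gamma_\xi}^{-1}(B_\xi)$ (we may assume $A$ has no largest element, since otherwise $X=X_{\max A}$ is second countable and the whole lemma is immediate). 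Build a strictly increasing continuous chain $\langle\beta_\xi:\xi<\omega_1\rangle$ in $A$ with $\gamma_\xi\leq\beta_{\xi+1}$, taking suprema at limit stages (available by $\sigma$‑completeness). For a limit $\lambda$, by almost‑continuity $X_{\beta_\lambda}$ is densely embedded in the limit of the countable cofinal subsystem below it, which lets one push the nonempty open set $p_{\beta_\lambda}(U_\lambda)$ down to a cylinder over some $\beta_{\mu(\lambda)}$ with $\mu(\lambda)<\lambda$. Pressing down the regressive map $\lambda\mapsto\mu(\lambda)$ by Fodor's lemma, and then pigeonholing on a fixed countable base of the second countable space sitting at the resulting stationary value, is meant to force two of the $U_\xi$ to meet, contradicting disjointness. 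The delicate point — where I expect the bulk of the work to lie — is to organize the reflection so that the pigeonholing genuinely yields an intersection of two of the $U_\xi$, rather than merely two of them meeting a common set; this is exactly where the hypothesis that $S$ is almost $\sigma$‑complete is used to the hilt, and once ccc is in hand the rest follows as above.
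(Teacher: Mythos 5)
Your step (1), your derivation of the saturation property from ccc, and your factorization argument are all sound, but the entire construction is made to rest on ccc, and ccc is precisely the part you do not prove: you offer a transfinite reflection with a pressing-down argument and then concede that its crucial step --- arranging the pigeonholing so that two members of the cellular family actually intersect, rather than merely meeting a common cylinder --- is missing. That concession is well founded: since each $U_\lambda=p_{\gamma_\lambda}^{-1}(B_\lambda)$ is only saturated over $\gamma_\lambda\leq\beta_{\lambda+1}$ and not over $\beta_\lambda$, the inclusion of a common set $(p^{\beta_{\lambda_i}}_{\beta_{\mu^*}})^{-1}(V)$ into the images $p_{\beta_{\lambda_i}}(U_{\lambda_i})$ does not pull back to an intersection of the $U_{\lambda_i}$ themselves. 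So the proof as written has a genuine gap, and because (3) and the factorization are conditional on ccc, the gap propagates to the whole lemma. Your diagnosis of where the difficulty lies is also off: the example of a non-ccc closed subspace of a Cantor cube is not relevant, because here the limit projections are \emph{open surjections} onto second countable spaces, and it is this (together with $\sigma$-completeness), not a Fodor-style reflection along an $\omega_1$-chain, that carries the argument.

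The paper proceeds in the opposite order and never needs ccc as an input. For a fixed open $U$ it runs a countable closing-off argument: choose $\beta_1$ and, using second countability of $X_{\beta_1}$, a countable family $\mathcal B_1(U)$ of basic open subsets of $U$ whose images under $p_{\beta_1}$ have dense union in $p_{\beta_1}(U)$; use $\sigma$-completeness to find $\beta_2\geq\beta_1$ over which every member of $\mathcal B_1(U)$ is a full preimage; iterate, and set $\beta=\sup_n\beta_n$, $\mathcal B_0=\bigcup_n\mathcal B_n(U)$. Almost continuity at $\beta$ gives that the images of $\mathcal B_0$ are dense in $p_\beta(U)$ while every member of $\mathcal B_0$ is $p_\beta$-saturated; openness of $p_\beta$ then yields both that $\bigcup\mathcal B_0$ is dense in $U$ and that $\overline{U}=p_\beta^{-1}\big(p_\beta(\overline{U})\big)$. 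This establishes your step (3) directly, and ccc falls out of the same construction: given a disjoint family $\{U_\xi\}$, apply it to $U=\bigcup_\xi U_\xi$ with the basic sets chosen inside individual $U_\xi$'s; countably many of them already have dense union in $U$, hence meet --- and by disjointness are contained in --- members $U_{\xi}$ realizing every index, so the family is countable. The missing idea in your proposal is thus not a better reflection scheme but the elementary use of second countability of the factors to control a single open set by countably many basic subsets at each stage.
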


\begin{proof}
More general statement was announce in \cite{v1}, for the sake of
completeness we provide a proof. Denote by $\mathcal B$ a base of
$X$ consisting of all open sets of the form $p_\beta^{-1}(W_\beta)$,
$\beta\in A$, where $W_\beta\subset X_\beta$ is open. Let $U\subset
X$ be open and $\mathcal B(U)=\{V\in\mathcal B:V\subset U\}$. We
construct by induction an increasing sequence $\{\beta_n\}\subset A$
and countable families  $\mathcal B_n(U)\subset\mathcal B(U)$,
$n\geq 1$, satisfying the following conditions:

\begin{itemize}
\item[$(i)_n$] $\mathcal B_n(U)\subset\mathcal B_{n+1}(U)$ for each $n$;
\item[$(ii)_n$] The family $\displaystyle\{p_{\beta_n}(W): W\in\mathcal B_n(U)\}$ is
dense in $p_{\beta_n}(U)$;
\item[$(iii)_n$] $\displaystyle p_{\beta_{n+1}}^{-1}\big(p_{\beta_{n+1}}(W)\big)=W$
for all $n\geq 1$ and $W\in\mathcal B_n(U)$.
\end{itemize}

Fix an arbitrary $\beta_1\in A$ and choose a countable family
$\mathcal B_1(U)\subset\mathcal B(U)$ such that
$\displaystyle\{p_{\beta_1}(W): W\in\mathcal B_1(U)\}$ is dense in
$p_{\beta_1}(U)$ (this can be done because $X_{\beta_1}$ is second
countable). Suppose $\beta_k$ and $\mathcal B_k(U)$ are already
constructed for all $k\leq n$. The family $\mathcal B_n(U)$ is
countable and for each $W\in\mathcal B_n(U)$ there exists
$\beta_W\in A$ with $\displaystyle
p_{\beta_W}^{-1}\big(p_{\beta_W}(W)\big)=W$. Moreover, $A$ is
$\sigma$-complete. So, we can find $\beta_{n+1}\geq\beta_n$
satisfying item $(iii)_n$. Next, we choose a countable family
$\mathcal B_{n+1}\subset\mathcal B$ containing $\mathcal B_{n}$ and
satisfying condition $(ii)_n$. This completes the induction.
Finally, let $\beta=\sup\{\beta_n:n\geq 1\}$ and $\mathcal
B_0=\bigcup_{n\geq 1}\mathcal B_n$. It is easily seen that
$\displaystyle\{p_{\beta}(W): W\in\mathcal B_0\}$ is dense in
$p_{\beta}(U)$ and $\displaystyle
p_{\beta}^{-1}\big(p_{\beta}(W)\big)=W$ for all $W\in\mathcal B_0$.
Since $p_\beta$ is open, this implies that $\bigcup\mathcal B_0$ is
dense in $U$ and $p_\beta^{-1}\big(p_\beta(\overline{U})\big)
=\overline{U}$.

Suppose now $f\colon X\to\mathbb R$ is a continuous function. Choose
a countable base $\mathcal U$ of $\mathbb R$. For each $U\in\mathcal
U$ there exists $\beta(U)\in A$ such that
$p_{\beta(U)}^{-1}\big(p_{\beta(U)}(\overline{U})\big)=\overline{U}$.
Let $\beta=\sup\{\beta(U):U\in\mathcal U\}$. Then
$p_{\beta}^{-1}\big(p_{\beta}(\overline{U})\big)=\overline{U}$ for
all $U\in\mathcal U$. The last equalities imply that if
$p_\beta(x)=p_\beta(y)$ for some $x,y\in X$, then $f(x)=f(y)$. So,
the function $g\colon X_\beta\to\mathbb R$,
$g(z)=f(p_\beta^{-1}(z))$, is well defined and $f=g\circ p_\beta$.
Finally, since $p_\beta$ is open, $g$ is continuous.
\end{proof}

\begin{pro}
Let $Y$ be a limit space of an almost $\sigma$-complete inverse
system with open bonding maps and second countable spaces. Suppose
$X$ is a $\pi$-regularly $C^*$-embedded subspace of $Y$. Then $X$ is
skeletally generated.
\end{pro}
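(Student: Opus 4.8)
The plan is to carry over to $X$ the inverse system living on $Y$. Write $Y=\underleftarrow{\lim}\{Y_\alpha,q^\beta_\alpha,A\}$ with limit projections $q_\alpha\colon Y\to Y_\alpha$, and for $\alpha\in A$ set $X_\alpha:=q_\alpha(X)$, $p_\alpha:=q_\alpha|X\colon X\to X_\alpha$ and $p^\beta_\alpha:=q^\beta_\alpha|X_\beta$. Each $X_\alpha$ is a subspace of the second countable space $Y_\alpha$, hence separable metrizable, and $A$ is $\sigma$-complete, so $(4)$ holds; $(6)$ is immediate, and, as noted after the definition of skeletally generated spaces, it forces $X$ to be dense in $\underleftarrow{\lim}\{X_\alpha,p^\beta_\alpha,A\}$, so that this limit is precisely $\cl_Y(X)$. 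For $(5)$, if $\{\alpha_n\}$ is a countable chain with $\beta=\sup_n\alpha_n$, almost continuity of the $Y$-system embeds $Y_\beta$ in $\underleftarrow{\lim}\{Y_{\alpha_n},q^{\alpha_{n+1}}_{\alpha_n}\}$, and restricting this embedding embeds $X_\beta$ in $\underleftarrow{\lim}\{X_{\alpha_n},p^{\alpha_{n+1}}_{\alpha_n}\}$. Surjectivity of each $p^\beta_\alpha$ follows from $q^\beta_\alpha\circ q_\beta=q_\alpha$ and $q_\beta(X)=X_\beta$.

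Condition $(7)$ is where $C^*$-embeddedness is used: given a bounded continuous $f\colon X\to\IR$, extend it to a bounded continuous $\tilde f\colon Y\to\IR$; by Lemma 3.6 there are $\alpha\in A$ and a continuous $h\colon Y_\alpha\to\IR$ with $\tilde f=h\circ q_\alpha$, so that $f=(h|X_\alpha)\circ p_\alpha$.

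It remains to arrange $(3)$, that is, skeletality of the bonding maps, which is the heart of the matter. First, two observations: the projections $q_\alpha\colon Y\to Y_\alpha$ are open, since the bonding maps of the $Y$-system are open and surjective; and, by Lemma 3.6, $Y$ is ccc, whence the disjointness clause in the definition of a $\pi$-regular embedding makes $X$ ccc too. Now skeletality of $p^\beta_\alpha$ follows from that of $p_\alpha$: because $p_\alpha=p^\beta_\alpha\circ p_\beta$ and $p_\beta$ is a continuous surjection onto $X_\beta$, for any nonempty open $O\subseteq X_\beta$ one has $p_\alpha(p_\beta^{-1}(O))=p^\beta_\alpha(O)$, so somewhere-density of the former forces it for the latter. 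To see that $p_\alpha$ is skeletal, fix a nonempty open $U\subseteq X$; choosing the $\pi$-base to consist of regular open subsets of $X$, pick a regular open $W$ with $\varnothing\ne W\subseteq U$ and put $G=\reg(W)$, a nonempty open subset of $Y$ with $\reg(W)\cap X$ dense in $W$. Applying Lemma 3.6 to $G$ (together with openness of $q_\gamma$) yields $\gamma\in A$ and an open $V\subseteq Y_\gamma$ with $G\subseteq q_\gamma^{-1}(V)\subseteq\cl_Y(G)$; hence $G':=q_\gamma^{-1}(V)\cap X=p_\gamma^{-1}\big(V\cap X_\gamma\big)$ is an open subset of $X$ containing $\reg(W)\cap X$, so $W\cap G'$ is still dense in $W$. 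Pushing the cylinder $G'$ forward through the open maps $q^\delta_\alpha$ and invoking the density of $\reg(W)\cap X$ in $W$, one deduces $\mathrm{Int}_{X_\alpha}\cl_{X_\alpha}\big(p_\alpha(U)\big)\ne\varnothing$.

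The principal obstacle is exactly that last deduction: since $X$ need not be dense in $Y$, the trace $\reg(W)\cap X$ is in general not dense in the open set $\reg(W)\subseteq Y$, so density cannot be transported down from $X\subseteq Y$ to $X_\alpha\subseteq Y_\alpha$ in a direct way. The resolution is to work with the cylindrical sets $G'$ produced by Lemma 3.6 — these are full preimages from lower levels and do meet $X$ densely over $W$ — and, should it be necessary, to replace $A$ by a cofinal $\sigma$-complete subset $A^*$ chosen so that each $\alpha\in A^*$ already contains the coordinate $\gamma$ attached to the relevant sets $G$. On such a cofinal $\sigma$-complete subsystem, conditions $(3)$–$(7)$ all hold, so $X$ is skeletally generated.
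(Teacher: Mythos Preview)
Your overall plan coincides with the paper's: restrict the $Y$-system to $X$ and then cut $A$ down to a suitable cofinal $\sigma$-complete subset on which the restricted projections are skeletal. Conditions $(4)$--$(7)$ are handled essentially as in the paper. But the heart of the matter---condition $(3)$---is not actually proved; the phrase ``one deduces $\mathrm{Int}_{X_\alpha}\cl_{X_\alpha}(p_\alpha(U))\ne\varnothing$'' hides a genuine gap, and the closing sentence (``should it be necessary, to replace $A$ by a cofinal $\sigma$-complete subset $A^*$\dots'') never defines $A^*$ or verifies anything about it.

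There are two concrete problems. First, your skeletality paragraph uses only the density clause of $\pi$-regularity, never the disjointness clause; yet disjointness is indispensable. Knowing that $\reg(W)\cap X$ is dense in $W$ tells you nothing about where $p_\alpha(W)$ sits inside $X_\alpha$, because $p_\alpha$ is not known to be open on $X$. The paper's argument (its Claim~10) runs by contradiction: if $\pi_\beta(\reg(U))\cap X_\beta$ escaped $\overline{p_\beta(U)}$, one would find a basic $V\in\mathcal A_\beta$ missing $\overline{\pi_\beta(U)}$, set $U_1=\pi_\beta^{-1}(V)\cap X$ (so $U_1\cap U=\varnothing$), use the admissibility of $\beta$ to see that $\overline{\reg(U_1)}$ is a $\pi_\beta$-cylinder, and then force $\reg(U)\cap\reg(U_1)\neq\varnothing$, contradicting disjointness. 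Your argument has no analogue of this step. (The ccc observation, while correct, is not used anywhere.)

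Second, the index $\gamma$ you extract from Lemma~3.6 depends on the particular $W$, hence on $U$. For a fixed $\alpha$ you cannot ``push $G'$ forward through $q^{\delta}_{\alpha}$'' without knowing $\gamma\le\alpha$, and arranging this simultaneously for \emph{all} open $W\subseteq X$ is impossible once $X$ is non-metrizable. The paper's remedy is to call $\beta$ \emph{$\reg$-admissible} when
\[
\pi_\beta^{-1}\bigl(\pi_\beta\bigl(\overline{\reg(\pi_\beta^{-1}(V)\cap X)}\bigr)\bigr)=\overline{\reg(\pi_\beta^{-1}(V)\cap X)}
\]
for every $V$ in a fixed countable base $\mathcal A_\beta$ of $Y_\beta$; this is a self-referential but \emph{countable} condition, so a back-and-forth induction (the paper's Claim~12) shows the admissible indices are cofinal, and they are $\sigma$-closed essentially by construction (Claim~11). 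Only with this precise choice of $A^*=\Lambda$ does the skeletality argument above go through. Your last paragraph gestures at exactly this mechanism but supplies none of the details needed to make it work.
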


\begin{proof}
Suppose $Y=\displaystyle\underleftarrow{\lim}S_Y$ and
$\displaystyle\reg\colon\mathcal T_X\to\mathcal T_{Y}$ is a
$\pi$-regular operator, where $S_Y=\{Y_\alpha, \pi^{\beta}_\alpha,
A\}$ is an almost $\sigma$-complete inverse system with open bonding
maps and second countable spaces $Y_\alpha$.  Then the limit
projections $\pi_\alpha\colon Y\to Y_\alpha$ are also open.

Let $\mathcal A_\beta$ be a countable open base for $Y_\beta$. We
say that $\beta\in A$ is {\em $\reg$-admissible} if
$$\pi_\beta^{-1}\big(\pi_\beta\big(\overline{\reg(\pi_\beta^{-1}(V)\cap X)}\big)\big)
=\overline{\reg(\pi_\beta^{-1}(V)\cap X)}\leqno{(16)}$$ for every
$V\in\mathcal A_\beta$. We also denote $X_\beta=\pi_\beta(X)$.

{\em Claim $10$. The map $p_\beta=\pi_\beta|X$ is skeletal for every
$\reg$-admissible $\beta\in A$.}

\smallskip
The proof of this claim is extracted from the proof of \cite[Lemma
9]{sh}. Let $U\subset X$ be open in $X$. Because $\pi_\beta$ is
open, it suffices to show that $\displaystyle\pi_\beta(\reg(U))\cap
X_\beta\subset\overline{\pi_\beta(U)}^{X_\beta}$. Suppose there
exists a point $z\in\pi_\beta(\reg(U))\cap
X_\beta\backslash\overline{\pi_\beta(U)}^{X_\beta}$ and take
$V\in\mathcal A_\beta$ containing $z$ such that
$V\cap\overline{\pi_\beta(U)}=\varnothing$ (here
$\overline{\pi_\beta(U)}$ is the closure in $Y_\beta$). Since
$\beta$ is $\reg$-admissible,
$\pi_\beta^{-1}\big(\pi_\beta\big(\overline{\reg(U_1)}\big)\big)=\overline{\reg(U_1)}$,
where $U_1=\pi_\beta^{-1}(V)\cap X$. Obviously, $U_1\cap
U=\varnothing$ and $\pi_\beta(U_1)=V\cap X_\beta$. Because
$\reg(U_1)\cap X$ is dense in $U_1$, we have
$\overline{\pi_\beta\big(\reg(U_1)\cap
X\big)}=\overline{\pi_\beta(U_1)}=\overline{V\cap X_\beta}$. Since
$\pi_\beta\big(\overline{\reg(U_1)}\big)$ is closed in $Y_\beta$
(recall that $\pi_\beta$ being open is a quotient map),
$z\in\pi_\beta\overline{\reg(U_1)}\cap\pi_\beta(\reg(U))$ which
implies $\overline{\reg(U_1)}\cap\reg(U)\neq\varnothing$. So,
$\reg(U_1)\cap\reg(U)\neq\varnothing$, and consequently, $U\cap
U_1\neq\varnothing$. This contradiction completes the proof of Claim
10.

\smallskip
{\em Claim $11$. Let $\{\beta_n\}_{n\geq 1}$ be an increasing
sequence of elements of $A$ such that each $\beta_{n+1}$ satisfies
the equality $(16)$ with $V\in\mathcal A_{\beta_n}$. Then
$\sup\{\beta_n:n\geq 1\}$ is $\reg$-admissible. In particular, this
is true if all $\beta_n$ are $\reg$-admissible.}

The proof of this claim follows from the definition of
$\reg$-admissible sets.

\smallskip
{\em Claim $12$. For every $\gamma\in A$ there exists an
$\reg$-admissible $\beta$  with $\gamma<\beta$.}

\smallskip
We construct by induction an increasing sequence $\{\beta_n\}_{n\geq
1}$ such that $\beta_1=\gamma$ and $\beta_{n+1}$ satisfies the
equality $(16)$ with $V\in\mathcal A_{\beta_n}$ for all $n\geq 1$.
Suppose $\beta_n$ is already constructed. By Lemma 3.6, for each
$V\in\mathcal A_{\beta_n}$ there exists $\beta(V)\in A$ such that
$\pi_{\beta(V)}^{-1}\big(\pi_{\beta(V)}\big(\overline{\reg(\pi_{\beta(V)}^{-1}(V)\cap
X)}\big)\big)=\overline{\reg(\pi_{\beta(V)}^{-1}(V)\cap X)}$ and
$\beta(V)\geq\beta_n$. Then $\beta_{n+1}=\sup\{\beta(V):V\in\mathcal
A_{\beta_n}\}$ is as desired (to be sure that $\beta_{n+1}$ exists,
we may assume that $\{\beta(V):V\in\mathcal A_{\beta_n}\}$ is an
increasing sequence). Finally, by Claim 11, $\beta=\sup\{\beta_n:
n\geq 1\}$ is $\reg$-admissible.

Now, consider the set $\Lambda\subset A$ consisting of all
$\reg$-admissible $\beta$ with the order inherited from $A$.
According to Claim 12, $\Lambda$ is directed. Claim 11 yields
$\Lambda$ is $\sigma$-complete and, by Claim 10, all $p_\beta$ are
skeletal maps. Hence, the bonding maps $p^\alpha_\beta\colon
X_\alpha\to X_\beta$, where $\beta,\alpha\in\Lambda$ and
$X_\alpha=p_\alpha(X)$, are also skeletal. Moreover, the inverse
system $\displaystyle S_X=\{X_\alpha, p^\beta_\alpha, \Lambda\}$ is
$\sigma$-complete and
$X=\displaystyle\mathrm{a}-\underleftarrow{\lim}S_X$. It remains to
show that the system $S_X$ satisfies condition (7). So, let $f\colon
X\to\mathbb R$ be a bounded continuous function. Next, extend $f$ to
a continuous function $\overline{f}\colon Y\to\mathbb R$ (recall
that $X$ is $C^*$-embedded in $Y$). Since any inverse
$\sigma$-complete system with open projections and second countable
spaces is factorizable (i.e., its limit space satisfies condition
(7)), see Lemma 3.6, there exists $\alpha\in\Lambda$ and a
continuous function $g:X_\alpha\to\mathbb R$ with $f=g\circ
p_\alpha$. Therefore, $X$ is skeletally generated.
\end{proof}

{\em Proof of Theorem $1.1$}. To prove implication $(i)\Rightarrow
(ii)$, suppose $X$ is $\mathrm{I}$-favorable with respect to co-zero
sets and $X$ is $C^*$-embedded in a space $Y$. Then
$\overline{X}^{\beta Y}$ is homeomorphic to $\beta X$. Since $\beta
X$ is also $\mathrm{I}$-favorable with respect to co-zero sets (see
Proposition 2.1), according to Proposition 3.5, $\beta X$ is
$\pi$-regularly embedded in $\beta Y$. This yields that $X$ is
$\pi$-regularly embedded in $Y$.

$(ii)\Rightarrow (iii)$ Let $X$ be a $C^*$-embedded subset of some
$\I^A$. Then $X$ is $\pi$-regularly embedded in $\I^A$. Since $\I^A$
is openly generated (it is the limit space of the continuous inverse
system $\{\I^B, \pi^C_B, B\subset C\subset A\}$ with all $B,C$ being
countable subsets of $A$), we can apply Proposition 3.7 to conclude
that $X$ is skeletally generated.

Finally, the implication $(iii)\Rightarrow (i)$ follows from Lemma
2.4 \hfill{$\Box$}

\smallskip
{\em Proof of Corollary $1.2$.} Let $X_\alpha$, $\alpha\in\Lambda$,
be a family of compact $\mathrm I$-favorable with respect to co-zero
sets spaces and $X=\prod_{\alpha\in\Lambda} X_\alpha$. We embed each
$X_\alpha$ is a Tychonoff cube $\mathbb I^{A(\alpha)}$ and let
$K=\prod_{\alpha\in\Lambda}\mathbb I^{A(\alpha)}$. By theorem
1.1(ii), there exists a $\pi$-regular operator $e_\alpha:\mathcal
T_{X_\alpha}\to\mathcal T_{I^{A(\alpha)}}$ for each
$\alpha\in\Lambda$. Let $\mathcal B$ be the family of all standard
open sets of the form $U=U_{\alpha(1)}\times..\times
U_{\alpha(k)}\times\prod\{X_\alpha:\alpha\neq\alpha_i, i=1,..,k\}$,
where each $U_{\alpha(i)}\subset X_{\alpha(i)}$ is open. For any
such $U\in\mathcal B$ we define\\
$\gamma(U)=e_{\alpha(1)}(U_{\alpha(1)})\times...\times
e_{\alpha(k)}(U_{\alpha(k)})\times\prod\{\mathbb
I^{A(\alpha)}:\alpha\neq\alpha_i, i=1,..,k\}$. Finally, we define a
function $e\colon\mathcal T_X\to\mathcal T_K$ by the equality
$e(W)=\bigcup\{\gamma(U):U\in\mathcal B{~}\mbox{and}{~}U\subset
W\}$. It is easily seen that $e$ is $\pi$-regular. Since $K$ is the
limit space of a continuous $\sigma$-complete inverse system
consisting of open bounding maps and compact metrizable spaces, by
Proposition 3.7, $X$ is skeletally generated. Hence, $X$ is $\mathrm
I$-favorable with respect to co-zero sets. \hfill{$\Box$}

\smallskip
{\em Proof of Corollary $1.3$.} Suppose $X\subset Y$ a
$C^*$-embedded $\mathrm I$-favorable space with respect to co-zero
sets, where $Y$ is extremally disconnected. Then, by Theorem
1.1(ii), there exists a $\pi$-regular operator
$\mathrm{e}\colon\mathcal T_X\to\mathcal T_Y$. We need to show that
the closure (in $X$) of every open subset of $X$ is also open. Since
$Y$ is extremally disconnected, $\overline{\mathrm{e}(U)}^Y$ is open
in $Y$. So, the proof will be done if we prove that
$\overline{\mathrm{e}(U)}^Y\cap X=\overline{U}^X$ for all
$U\in\mathcal T_X$. By $(1)$, we have
$\overline{U}^X\subset\overline{\mathrm{e}(U)}^Y\cap X$. Assume
there exists $x\in\overline{\mathrm{e}(U)}^Y\cap
X\backslash\overline{U}^X$ and choose $V\in\mathcal T_X$ with
$V\subset\overline{\mathrm{e}(U)}^Y\backslash\overline{U}^X$. Then
$\mathrm{e}(V)\cap\overline{\mathrm{e}(U)}^Y\neq\varnothing$, so
$\mathrm{e}(V)\cap\mathrm{e}(U)\neq\varnothing$. The last one
contradicts $U\cap V=\varnothing$. \hfill{$\Box$}

\smallskip
\textbf{Acknowledgments.} The author would like to express his
gratitude to A. Kucharski and S. Plewik for providing their recent
papers on $\mathrm I$-favorable spaces, and for their critical
remarks concerning the first version of this paper. Thanks also go
to Prof. M. Choban for supplying me with the Bereznicki\v{i} results
from \cite{b}.


\end{document}